\documentclass[11pt]{conm-p-l}
\usepackage{aliascnt}

\usepackage[margin=1in]{geometry}
\usepackage{amsmath}
\usepackage{amssymb}
\usepackage{hyperref}
\usepackage{amsthm}
\usepackage{cleveref}

\usepackage{amssymb}
\newcommand{\dashedrightarrowhead}{\mathrel{\text{\setbox0\hbox{$\twoheadrightarrow$}\rlap{\hbox to \wd0{\hfil$\dashrightarrow$}}\box0}}}

\usepackage{todonotes}

\DeclareMathOperator{\Image}{Im}
\DeclareMathOperator{\height}{ht}
\def\P {\mathbb{P}}
\newcommand{\CC}{\mathbb{C}}

\DeclareMathOperator{\spr}{spr}

\theoremstyle{definition}
\newtheorem{theorem}{Theorem}[section]

\newaliascnt{definition}{theorem}
\newtheorem{definition}[definition]{Definition}
\aliascntresetthe{definition}
\crefname{definition}{definition}{definitions}
\Crefname{definition}{Definition}{Definitions}

\newaliascnt{remark}{theorem}
\newtheorem{remark}[remark]{Remark}
\aliascntresetthe{remark}
\crefname{remark}{remark}{remarks}
\Crefname{remark}{Remark}{Remarks}

\newaliascnt{lemma}{theorem}
\newtheorem{lemma}[lemma]{Lemma}
\aliascntresetthe{lemma}
\crefname{lemma}{lemma}{lemmas}
\Crefname{lemma}{Lemma}{Lemmas}

\newaliascnt{proposition}{theorem}
\newtheorem{proposition}[proposition]{Proposition}
\aliascntresetthe{proposition}
\crefname{proposition}{proposition}{propositions}
\Crefname{proposition}{Proposition}{Propositions}

\newaliascnt{corollary}{theorem}
\newtheorem{corollary}[corollary]{Corollary}
\aliascntresetthe{corollary}
\crefname{corollary}{corollary}{corollaries}
\Crefname{corollary}{Corollary}{Corollaries}

\newaliascnt{example}{theorem}
\newtheorem{example}[example]{Example}
\aliascntresetthe{example}
\crefname{example}{example}{examples}
\Crefname{example}{Example}{Examples}

\newaliascnt{problem}{theorem}
\newtheorem{problem}[problem]{Problem}
\aliascntresetthe{problem}
\crefname{problem}{problem}{problems}
\Crefname{problem}{Problem}{Problems}

\newaliascnt{solution}{theorem}

\aliascntresetthe{solution}
\crefname{solution}{solution}{solutions}
\Crefname{solution}{Solution}{Solutions}
\DeclareMathOperator{\Seg}{Seg}

\title{Segre-Determinantal Loci and the Image Variety for Three Flatland Cameras}

\author[Alstad]{Colin Alstad}
\email{calstad@clemson.edu}

\author[Duff]{Timothy Duff}
\email{tduff@missouri.edu}

\author[Katzman]{Mordechai Katzman}
\email{m.katzman@sheffield.ac.uk}

\begin{document}

\begin{abstract}
  Motivated by applications of algebraic geometry to reconstruction problems in computer vision, we initiate a study of the equations of degeneracy loci associated with linearly dependent points on Segre varieties. When these points are constrained to lie on a common hyperplane, we prove that the vanishing ideals of these loci are prime, Cohen-Macaulay, and generated by the natural maximal minors, and that these minors form a universal Gr\"{o}bner basis.
\end{abstract}

\maketitle
\section{Introduction} \label{sec:intro}

The story of this paper begins with a problem posed at the Fields Institute's Summer 2025 Workshop on Applications of Commutative Algebra.
\begin{problem}\label{problem:flatland}
\emph{Given $n$ triples of points on the projective line, 
$$(p_{1j}, p_{2j}, p_{3j}) \in \left( \mathbb{P}^{1}\right)^{\times 3}  ,\quad 1\le j\le n,$$ 
when does there exist a 2-dimensional reconstruction consisting of points $$q_{1},\dots,q_{n}\in\mathbb{P}^{2}$$ and three full-rank linear projections  $$A_{i}:\mathbb{P}^{2}\dashrightarrow\mathbb{P}^{1}, \quad 1\le i \le 3,$$
such that $A_i q_j \sim p_{ij}$ for all $1\le i\le 3$ and $1\le j\le n$?}
\end{problem}
The entities $ A_i, q_j$ and $p_{ij}$ appearing in \Cref{problem:flatland}  may be represented by matrices with dimensions $2\times 3$, $3\times 1$, and $2\times 1$, respectively, and the predicate $\sim $ denotes equality up to scale.
The linear projections $A_1, A_2, A_3$ are commonly known as \emph{flatland cameras}, and play an important role in an emerging interdisciplinary field known as \emph{algebraic vision}~\cite{kileel2022snapshot}.

\Cref{problem:flatland} concerns reconstructing a 2D scene from 1D images.
Higher-dimensional analogues are naturally also of interest: in particular, reconstructing a 3D scene from 2D images, where as few as two cameras may be used.
Let us point out, however, that the reconstruction problem for just two flatland cameras is completely trivial!
This can be seen through the following construction, adapted from~\cite{agarwal2026computer}: if we choose coordinates in $\mathbb{P}^1$ so that
$$
p_{1j}\sim \begin{pmatrix}
a_j\\
    1
\end{pmatrix}, \, \,
p_{2j} \sim 
\begin{pmatrix}
    1\\
    b_j
\end{pmatrix} \in \mathbb{P}^1,
\quad 
1\le j \le n,
$$
then one may easily produce a reconstruction by setting
$$
A_1 = \begin{pmatrix}
1 & 0 & 0 \\
0 & 1 & 0 
\end{pmatrix},
\quad 
A_2 = \begin{pmatrix}
0 & 1 & 0  \\
0 & 0 & 1 
\end{pmatrix}, 
\quad 
q_j =
\begin{pmatrix}
a_j\\
1\\
b_j
\end{pmatrix},
\quad 
1\le j \le n.
$$
Thus, as suggested by the statement of \Cref{problem:flatland}, three flatland cameras are needed in order to make the flatlander reconstruction problem interesting.

The main results of this paper imply, as a special case, an \emph{implicit characterization} of the solutions to \Cref{problem:flatland}.
Following~\cite{agarwal2024atlas}, we study the \emph{flatlander image variety}: letting $\mathbb{P}^{2\times 3-1}\cong \mathbb{P}^5$ denote the projective space of all $2\times 3$ matrices, this is the closure of the image of the rational map
\begin{align}
\Theta_{n} : \left( \P^2 \right)^{\times n} \times \left( \P^{2\times 3-1}\right)^{\times 3} &\dashrightarrow \left( \P^{1}\right)^{\times 3n} \nonumber  \\
(q_1, \ldots , q_n, A_1, A_2, A_3 ) &\mapsto \left(A_i q_j \mid 1\le i \le 3 , 1\le j \le n \right)  \label{eq:flatland-image-map}.
\end{align}

Here, and throughout the paper, we work with complex algebraic varieties, so that the closure $\overline{\operatorname{im} \left(\Theta_n\right)}$ may be understood in either the Zariski or the analytic topology.
Our main results imply, as a special case, that the flatlander image variety is a determinantal variety with especially nice equations.
More precisely, our results imply for all $n\ge 1$ that
\begin{equation}\label{eq:flatland-implicit}
\overline{\operatorname{im} \left(\Theta_n\right)}
=
\left\{ 
(p_{11}, \ldots , p_{3n}) \in \left(\mathbb{P}^1 \right)^{\times 3n}
\, \, \Bigg\vert
\, \, 
\operatorname{rank} \left( 
\begin{array}{ccc}
\sigma (p_{11}, p_{21}, p_{31})^T\\
\hline 
\vdots \\
\hline 
\sigma (p_{1n}, p_{2n}, p_{3n})^T
\end{array}
\right) < 8
\right\},
\end{equation}
where 
\begin{equation}\label{flatland:segre}
\sigma (p_{1j}, p_{2j}, p_{3j}) = 
p_{1j} \otimes p_{2j} \otimes p_{3j} 
\in \mathbb{P}^7
\end{equation}
denotes a $8\times 1$ vector representing the Segre embedding $\P^1 \times \P^1 \times \P^1 \hookrightarrow \P^7$.

Since the flatlander image variety in~\Cref{eq:flatland-implicit} is defined by the rank-deficiency of an $n \times 8$ matrix, we see in particular that \Cref{problem:flatland} can be solved in the affirmative for almost all choices of $n<8$ point-triples.
For $n=8,$ the flatlander image variety is a hypersurface in $\left( \mathbb{P}^1 \right)^{\times 24}.$
Its equation is given by the \emph{Segre determinant} studied extensively in recent work of Pratt~\cite{pratt2025segre}.

While proving~\Cref{eq:flatland-implicit}, it became natural for us to study not only the left side of this equation, which served as our original motivation, but also a more general class of \emph{Segre determinantal loci} suggested by the right side.
In \Cref{sec:sdl}, we systematically study the maximal minor ideals of Segre-determinantal matrices, proving that they are prime and Cohen-Macaulay of the expected height (\Cref{thm:sdl-irreducibility,thm:sdl-prime-cm}).
\Cref{sec:ugb} continues this study from a combinatorial angle: we prove \Cref{thm:ugb}, which states that the maximal Segre-determinantal minors form a \emph{universal Gr\"{o}bner basis}, i.e.~a Gr\"{o}bner basis for all possible monomial orders.
Finally, in \Cref{sec:sdl-in-cv}, we return to the original motivation from computer vision, presenting our solution for the imaging map in~\Cref{eq:flatland-image-map} (\Cref{thm:flatland}), and discussing further connections for future study.

\section{Segre-Determinantal Loci} \label{sec:sdl}

For a finite sequence of integers $m_{\bullet} = (m_1, \ldots , m_r)$ with $m_i \ge 1$ for all $i,$ consider the product of projective spaces $\P^{m_\bullet} := \P^{m_1} \times \cdots \times \P^{m_r}$ under its Segre embedding,
\begin{align}
    \sigma_{m_\bullet } : \P^{m_\bullet} &\to \P^{M (m_\bullet)} \nonumber \\
    \left( [p_{i, 0} : \, \cdots \, : p_{i, m_i}] \mid 1\le i \le r \right) &\mapsto \left[ \left(p_{1,0} \cdots p_{r,0}\right) : \, \cdots \, : \left( p_{1,m_1} \cdots p_{r,m_r}\right)\right], \label{eq:segre-map}
\end{align}
where we define
\begin{equation}\label{eq:M-of-m-bullet}
    M (m_\bullet)  = (m_1 + 1)  \cdots  (m_r + 1) - 1.
\end{equation}
For convenience, we also introduce the notation
\begin{equation}\label{eq:d-of-m-bullet}
d (m_\bullet) = m_1 + \ldots + m_r,
\end{equation}
the dimension of the \emph{Segre variety}, whose points $\sigma_{m_\bullet} (P) \in \P^{M(m_\bullet)}$ are naturally identified with their preimages  $P = (p_1, \ldots , p_r) \in \P^{m_\bullet}$ under $\sigma_{m_\bullet}.$
\begin{definition}\label{def:sdl}
For any $m_\bullet $ and $n \ge 1,$ we define
the \emph{Segre-determinantal locus} to be the smallest closed subvariety $\Seg_{m_{\bullet} , n}\subset (\P^{m_\bullet})^{\times n} $ containing all $p_\bullet = (P_1, \ldots , P_n) \in (\P^{m_\bullet})^{\times n} $
whose Segre images
$\sigma_{m_\bullet } (P_1), \ldots , \sigma_{m_\bullet } (P_n) \in \P^{M (m_\bullet)}$ lie on some common hyperplane.
\end{definition}

The name ``Segre-determinantal locus'' is derived from the case of a hypersurface in a product of $r=2$ projective spaces,
\begin{equation}\label{eq:segre-determinant}
\Seg_{(m_1, m_2) , n}\subseteq \left( \P^{m_1} \times \P^{m_2} \right)^{\times n},
\quad
n = M(m_\bullet) + 1 = (m_1+1) (m_2+1),
\end{equation}
whose equation is the \emph{Segre determinant} recently studied by Pratt~\cite{pratt2025segre}.
For $r>2$ factors, \Cref{thm:sdl-prime-cm} establishes that the equations of $\Seg_{m_\bullet , n}$ are themselves just particular specializations of some Segre determinant in more variables.
Besides their connection to algebraic vision, discussed further in \Cref{sec:sdl-in-cv}, we refer to~\cite{pratt2025segre} for a discussion of how Segre determinants connect to geometric invariant theory and the study of Chow-Lam forms, which arise as irreducible factors of certain specializations of Segre determinants.

\begin{remark}\label{rem:higher-codimension}
For $1\le k < M(m_\bullet ),$ one could of course study an even more general class of Segre-determinantal loci $\Seg_{m_{\bullet} , n}^k$, where we instead require that the Segre images of \Cref{def:sdl} lie on a common subspace of codimension $k$ (hence $\Seg_{m_{\bullet} , n}=\Seg_{m_{\bullet} , n}^{1}$).
However, besides the further notational complications this introduces, the results we obtain in this work do not easily generalize to this setting. See \Cref{ex:p111-four-minors-not-prime} for further discussion.
\end{remark}

Note that for $n\le M (m_\bullet)$ in \Cref{def:sdl}, we have $\Seg_{m_{\bullet}, n}\ = (\P^{m_\bullet})^{\times n}.$
Thus, the results that follow are mainly interesting when $n > M (m_\bullet) .$
We now state the first of these results.

\begin{theorem}
  \label{thm:sdl-irreducibility}
For any $m_\bullet = (m_1, \ldots ,m_r)$ and any choice of $n > M (m_\bullet)$, the Segre-determinantal locus $\Seg_{m_\bullet , n} \subset (\P^{m_\bullet})^{\times n}$ is an irreducible subvariety of codimension $n - M (m_\bullet).$
\end{theorem}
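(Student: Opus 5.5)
We realize $\Seg_{m_\bullet,n}$ as the image of an incidence variety. Set $N = M(m_\bullet)$ and let $(\P^{N})^\vee$ be the space of hyperplanes in $\P^{N}$. Consider
\[
\mathcal{I} = \left\{ (H, P_1, \ldots, P_n) \in (\P^{N})^\vee \times (\P^{m_\bullet})^{\times n} \;\middle|\; \sigma_{m_\bullet}(P_j) \in H \text{ for all } j \right\},
\]
together with its two projections $\pi_1 : \mathcal{I} \to (\P^N)^\vee$ and $\pi_2 : \mathcal{I} \to (\P^{m_\bullet})^{\times n}$.

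By definition, $\Seg_{m_\bullet,n}$ is the closure of $\pi_2(\mathcal{I})$. Since $\mathcal{I}$ is closed and all spaces are projective, $\pi_2$ is proper, so in fact $\Seg_{m_\bullet,n} = \pi_2(\mathcal{I})$.

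\textbf{Irreducibility.} The fiber of $\pi_1$ over $H$ is $(X_H)^{\times n}$, where $X_H = \sigma_{m_\bullet}^{-1}(H) \subset \P^{m_\bullet}$ is the divisor of a multilinear form of multidegree $(1,\ldots,1)$. For generic $H$ this form is irreducible when $d(m_\bullet)\ge 2$, since a generic section of a very ample line bundle on a smooth projective variety of dimension at least $2$ is smooth and connected by Bertini. So the generic fiber is irreducible of dimension $n(d-1)$, where $d = d(m_\bullet)$.

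Special fibers are bad, because $X_H$ may be reducible. To avoid this, I would let $\mathcal{I}^\circ \subset \mathcal{I}$ be the open set lying over the dense open $U \subset (\P^N)^\vee$ where $X_H$ is irreducible. Since every fiber over $U$ is equidimensional and irreducible and $U$ is irreducible, the closure $\overline{\mathcal{I}^\circ}$ is irreducible of dimension $N + n(d-1)$.

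Next, $\pi_2(\overline{\mathcal{I}^\circ})$ is irreducible and closed, and I would show it equals $\Seg_{m_\bullet,n}$. Any tuple lying on a common hyperplane $H$ is a limit of tuples lying on hyperplanes $H_t \in U$: deform $H$ to generic $H_t$ and lift each point $P_j \in X_H$ to nearby points of $X_{H_t}$. This lifting works because $\pi_1$ restricted to the single-point incidence $\{(H,P) : \sigma(P)\in H\}$ is a projective bundle over $\P^{m_\bullet}$, hence irreducible of dimension $N-1+d$ and flat onto $(\P^N)^\vee$, so the family is open. The degenerate case $d=1$, i.e.\ $\P^1$ with $N=1$, is handled directly: there $\Seg_{(1),n}$ is the set of tuples with at most one distinct point, which is the diagonal and is irreducible of codimension $n-1$.

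\textbf{Dimension.} It remains to show that $\pi_2$ is generically finite onto its image, indeed generically injective. Then
\[
\dim \Seg_{m_\bullet,n} = N + n(d-1) = nd - (n-N),
\]
so the codimension is $n - M(m_\bullet)$, as claimed. Generic injectivity amounts to this: for generic $H$ and generic $P_1,\ldots,P_n \in X_H$ with $n > N$, the points $\sigma(P_1),\ldots,\sigma(P_n)$ span $H$. Equivalently, choosing $N$ of them, they are linearly independent. This holds because $\sigma(X_H)$ is nondegenerate in $H$: it is an irreducible hyperplane section of a nondegenerate variety, and the restriction map $H^0(\mathcal{O}(1,\ldots,1)) \to H^0(\mathcal{O}_{X_H}(1,\ldots,1))$ has kernel spanned by the defining form alone. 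Generic points of a nondegenerate irreducible variety spanning a space of dimension $N-1$ give $N$ independent points. Hence the fiber of $\pi_2$ over a generic image point is the single hyperplane $H$.

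\textbf{Main obstacle.} I expect the main obstacle to be the irreducibility step. One must ensure that components of $\mathcal{I}$ lying over special hyperplanes, where $X_H$ is reducible (for example $H$ given by a rank-one tensor), do not produce extra components of $\pi_2(\mathcal{I})$. The deformation/flatness argument above is the first thing I would try. As an alternative, I would realize $\Seg_{m_\bullet,n}$ as the image of the irreducible variety of pairs (linear functional $\ell$, points with $\ell(\sigma(P_j))=0$) fibered over $(\P^{m_\bullet})^{\times n}$ instead: that is the kernel of the $n\times(N+1)$ Segre matrix. Since $\ell$ ranges over a linear space of dimension at least $1$ everywhere, one can bound components by dimension counts on the strata where the rank drops further.
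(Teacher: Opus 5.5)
Your proposal is correct. It shares the paper's skeleton: the incidence correspondence over the dual space, Bertini to get irreducible fibers $X_H^{\times n}$ over a dense open $U$, and generic injectivity of the projection to $(\P^{m_\bullet})^{\times n}$. It closes the two key steps differently. Write $d=d(m_\bullet)$ and $M=M(m_\bullet)$ (your $N$).

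\textbf{Irreducibility.} To rule out extra components over special hyperplanes, the paper uses a pure dimension count. It bounds the dimension of each irreducible component from below by $nd+M-n$, since the incidence variety is cut out by $n$ equations. It bounds it from above by $n(d-1)$ plus the dimension of the component's image in the dual space, since every hyperplane section of the nondegenerate Segre variety has dimension $d-1$. Hence every component dominates the dual space and meets the good open set. You reach the same conclusion from openness of a flat morphism. Two points need to be made explicit:
\begin{itemize}
\item Flatness of $\{(H,P):\sigma_{m_\bullet}(P)\in H\}\to(\P^{M})^\vee$ does not follow just from its being a projective bundle over $\P^{m_\bullet}$. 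It comes from miracle flatness: smooth source, smooth target, and \emph{every} fiber $X_H$ of pure dimension $d-1$. That last condition is exactly the nondegeneracy input behind the paper's upper bound.
\item The simultaneous lifting of all $n$ points is cleanest by noting that the $n$-fold fiber product $\mathcal{I}\to(\P^{M})^\vee$ is then flat, hence open. So every nonempty open subset of $\mathcal{I}$ meets the preimage of $U$.
\end{itemize}

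\textbf{Dimension.} You show that a generic section $X\cap H$ spans $H$, because the kernel of restriction to $X_H$ is spanned by the defining form (this uses that $X_H$ is reduced). The paper instead exhibits one point with singleton fiber: $M$ independent Segre points spanning a hyperplane $H_0$, padded by arbitrary points of $X\cap H_0$. It then invokes upper semicontinuity of fiber dimension.

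\textbf{Comparison.} The paper's route is more elementary, with no flatness and no analysis of generic hyperplane sections. Yours is more conceptual and gives slightly more, namely flatness of $\mathcal{I}$ over the dual space. Your base case is also leaner. Only $d=1$ needs separate treatment, since for $r=1$, $m_1\ge 2$ every $X_H$ is a hyperplane and your general argument applies. The paper instead handles all of $r=1$ via classical determinantal varieties.
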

\begin{proof}
For $r=1,$ the equations of $\Seg_{m_\bullet , n}$ are those of the classical determinantal varieties, from which the result readily follows.
Thus, we assume $r>1$ for the remainder of the proof.

  Consider the incidence variety
  \[
    W:=\left\{((P_1,\ldots,P_n),H)\in (\P^{m_\bullet})^{\times n}\times \left(\P^{M(m_\bullet)}\right)^*
      \;\middle|\;
      \sigma_{m_\bullet}(P_i)\in H \text{ for all } i
    \right\}.
  \]
  Here \(\left(\P^{M(m_\bullet)}\right)^*\) denotes the dual projective space parametrizing hyperplanes in \(\P^{M(m_\bullet)}\). Let
  \begin{align*}
    \pi_1&:W\to (\P^{m_\bullet})^{\times n},
    &
      ((P_1,\ldots,P_n),H)&\mapsto (P_1,\ldots,P_n), \\
    \pi_2&:W\to \left(\P^{M(m_\bullet)}\right)^*,
    &
      ((P_1,\ldots,P_n),H)&\mapsto H
  \end{align*}
  be the two coordinate projections of \(W\).

  Since \(W\) is closed in the projective variety \(\left(\P^{m_\bullet}\right)^{\times n}\times \left(\P^{M(m_\bullet)}\right)^*\), the image \(\pi_1(W)\) is closed. By definition, \(\pi_1(W)\) is exactly the Segre-determinantal locus \(\Seg_{m_\bullet,n}\). Thus, to complete the proof, it will suffice to show that \(W\) is irreducible and to compute \(\dim \left(\pi_1(W)\right)\).

  Let \(X:=\Image(\sigma_{m_\bullet})\subset \P^{M(m_\bullet)}\) be the Segre variety. Since $X$ is a nondegenerate, irreducible, and smooth projective variety of dimension \(d (m_\bullet) \), a generic hyperplane section $X\cap H$ has pure dimension $d(m_\bullet ) - 1$. More precisely, by Bertini's theorem \cite[Théorème~6.3(4)]{jouanolou1983}, there exists a dense open subset $\mathcal{U} \subset \left(\P^{M(m_\bullet)}\right)^*$ such that for all $H\in \mathcal{U}$, the hyperplane section $X\cap H$ is irreducible.
  For such a hyperplane $H \in \mathcal{U}$, the fiber of $\pi_2$ is
  \[
    \pi_2^{-1}(H)
    =
    \{(P_1,\ldots,P_n)\in (\P^{m_\bullet})^{\times n} : \sigma_{m_\bullet}(P_i)\in H
    \text{ for all }i\}.
  \]
  The Segre embedding $\sigma_{m_\bullet }$ is a (biregular) isomorphism of varieties, so applying \(\sigma_{m_\bullet}\) in each factor gives an isomorphism
  \begin{equation}
    \label{eq:fiberIsoProdIntersections}
    \pi_2^{-1}(H) \xrightarrow{\sim} (X\cap H)^{\times n},\qquad
    (P_1,\ldots,P_n)\longmapsto
    \bigl(\sigma_{m_\bullet}(P_1),\ldots,\sigma_{m_\bullet}(P_n)\bigr).
  \end{equation}
  Since a finite product of irreducible varieties is irreducible (see \cite[\S 3.1, Theorem 3]{Shafarevich94}), it follows that $\pi_2^{-1}(H)$ is irreducible of dimension \(n(d(m_\bullet)-1)\).
  Since $r>1,$ we have $d(m_\bullet )\ge 2,$ and hence 
  $$
\dim \left( \pi_2^{-1}(H) \right)  = n(d(m_\bullet)-1) > 1.
  $$
  Consider the open subset $W^{\mathrm{O}} := \pi_2^{-1}(\mathcal{U}) \subset W$.
  Since $\mathcal{U}$ is irreducible and $\pi_2$ is a morphism with irreducible fibers of constant dimension over $\mathcal{U}$, it follows from the dimension estimate above and the irreducibility criterion for fibered varieties \cite[Ch.~I, \S6, Theorem 7 (ii)]{Shafarevich94} that $W^{\mathrm{O}}$ is irreducible of dimension
  \begin{align*}
    \dim(W^{\mathrm{O}}) &= \dim(\mathcal{U})+n(d(m_\bullet)-1) \\
                         &=  M(m_\bullet)+n(d(m_\bullet)-1).
  \end{align*}
  To show that $W$ is irreducible, let \(W^{\prime}\) be an irreducible component of \(W\). The ambient variety \((\P^{m_\bullet})^{\times n}\times (\P^{M(m_\bullet)})^*\) has dimension \(nd(m_\bullet)+M(m_\bullet)\), and \(W\) is a subvariety satisfying \(n\) Zariski-closed conditions \(\sigma_{m_\bullet}(P_i)\in H\) (for \(1\le i\le n\)). 
  Hence every irreducible component of \(W\) has codimension at most \(n\) (see e.g.~\cite[Ch.~I, \S6, Corollary 5]{Shafarevich94}), and so
  \begin{equation}\label{eq:dimW'-lower}
    \dim(W^{\prime}) \ge nd(m_\bullet )+M(m_\bullet)-n = M(m_\bullet) +n(d(m_\bullet)-1).
  \end{equation}
  On the other hand, since \(X\) is nondegenerate, \(\dim(X \cap H) \leq d(m_\bullet) - 1\) for any hyperplane \(H\).  By~\Cref{eq:fiberIsoProdIntersections}, \(\pi_{2}^{-1}(H) \cong (X \cap H)^{\times n}\) and so
  \begin{equation}\label{eq:dim-fiberpi2-upper}\dim(\pi_{2}^{-1}(H)) \leq n(d(m_\bullet)-1).\end{equation}
  Combining this last inequality with the fiber-dimension theorem (e.g.~\cite[Ch.~I, \S6, Corollary 5]{Shafarevich94}) gives us the bound
  \begin{equation}\label{eq:dimW'-upper}\dim(W^{\prime}) \le \dim\left(\overline{\pi_{2}(W^{\prime})}\right) + n(d(m_\bullet)-1) \le M(m_\bullet)+n(d(m_\bullet) - 1).
  \end{equation}
  Combining the inequalities in~\Cref{eq:dimW'-lower} and~\Cref{eq:dimW'-upper}, we obtain
  \begin{align*}
    \dim(W^{\prime}) &= M(m_\bullet) +n(d(m_\bullet)-1),\\
    \dim\left(\overline{\pi_{2}(W^{\prime})}\right) &= M(m_\bullet).
  \end{align*}
  Thus \(\pi_2(W')\) is dense in \(\left(\P^{M(m_\bullet)}\right)^*\), so \(W'\) meets \(W^{\mathrm{O}}\). Since \(W^{\mathrm{O}}\) is open in \(W\), the intersection \(W' \cap W^{\mathrm{O}}\) is a nonempty open subset of the irreducible variety \(W'\), and is therefore dense in \(W'\). Since $W'$ was an arbitrary irreducible component of $W,$ we have thus shown that every irreducible component of \(W\) is contained in \(\overline{W^{\mathrm{O}}}\), and hence \(W = \overline{W^{\mathrm{O}}}\) is irreducible.

  Finally, it remains to compute the (co)dimension of \(\Seg_{m_\bullet,n}\). Note that the fiber of \(\pi_1\) over a point \((P_1,\ldots,P_n)\in\Seg_{m_\bullet,n}\) is
  \[
    \pi_1^{-1}(P_1,\ldots,P_n)
    =
    \bigl\{H\in \left(\P^{M(m_\bullet)}\right)^*:\sigma_{m_\bullet}(P_i)\in H\text{ for all }i\bigr\},
  \]
  i.e.~the set of all hyperplanes containing the linear space 
  $$
  \langle\sigma_{m_\bullet}(P_1),\ldots,\sigma_{m_\bullet}(P_n)\rangle.
  $$
  Let us choose points \(P_1,\ldots,P_{M(m_\bullet)}\in \mathbb{P}^{m_\bullet}\) so that their Segre images are in linear general position, and consider the hyperplane that they span,
  $$
H_0 = \langle\sigma_{m_\bullet}(P_1),\ldots,\sigma_{m_\bullet}(P_{M (m_\bullet )})\rangle .
  $$
Choosing additional points
  $$
  P_{M(m_\bullet)+1},\ldots,P_n \in \sigma_{m_\bullet}^{-1}(X\cap H_0),
  $$
  we obtain a point on the Segre-determinantal locus $$ (P_1, \ldots , P_n ) \in  \Seg_{m_\bullet,n},$$ whose fiber under \(\pi_1\) is the singleton \(\{H_0\}\). 
  Since $\pi_1$ surjects onto $\Seg_{m_\bullet , n}$, the fiber-dimension theorem implies the desired conclusion that
  \[
    \dim(\Seg_{m_\bullet,n})
    =
    \dim(W)
    =
    M(m_\bullet)+n(d(m_\bullet)-1).
  \]
\end{proof}

To make the preceding proof more concrete, we consider an example.

\begin{example}\label{ex:sdl-flatland-special-case}
Consider the sequence $m_\bullet=(1,1,1)$, so that
\[
  \P^{m_\bullet}=\P^1\times\P^1\times\P^1,
  \qquad
  d(m_\bullet)=1+1+1=3,
  \qquad
  M(m_\bullet)=(1+1)^3-1=7,
\]
and \(\sigma_{m_\bullet }  = \sigma_{(1,1,1)} \) is the Segre embedding
\[
  \P^1\times\P^1\times\P^1 \hookrightarrow \P^7.
\]
The Segre-determinantal locus \(\Seg_{(1,1,1),\, n}\subset \left(\P^1\times\P^1\times\P^1\right)^{\times n}\) parametrizes \(n\)-tuples \(P_1,\ldots,P_n\) whose Segre images \(\sigma_{(1,1,1)}(P_1),\ldots,\sigma_{(1,1,1)}(P_n)\) lie on a common hyperplane in \(\P^7\).
As we will see later in \Cref{thm:flatland}, this is precisely the Zariski closure of the image of the flatlander imaging map in~\Cref{eq:flatland-image-map} from the introduction.

The incidence variety appearing in the proof of \Cref{thm:sdl-irreducibility} is
\[
  \left\{
    ((P_1,\ldots,P_n),H)\in
    (\P^1\times\P^1\times\P^1)^{\times n}\times(\P^7)^*
    \;\middle|\;
    \sigma_{(1,1,1)}(P_i)\in H \text{ for all } i
  \right\}.
\]
For a generic hyperplane \(H\), Bertini's theorem implies the intersection
\[
  \Image(\sigma_{(1,1,1)})\cap H
\]
is an irreducible surface, and hence $\pi_2^{-1} (H)$ is irreducible of dimension \(2n\).
Therefore, the proof of \Cref{thm:sdl-irreducibility} gives the dimension count
\[
  \dim \Seg_{(1,1,1),n}=2n+7, 
  \quad n\ge 7.
\]
In particular, \(\Seg_{(1,1,1),8} \subset (\P^1\times\P^1\times\P^1)^{\times 8}\) is a hypersurface.
\end{example}

Having shown irreducibility of the Segre-determinantal loci in \Cref{thm:sdl-irreducibility}, we now concentrate on describing their prime vanishing ideals with our next result, \Cref{thm:sdl-prime-cm}.

Retaining our previous notation, consider the polynomial ring with complex coefficients in \( n(d(m_\bullet)+r) \) variables defined as 
\begin{equation}
    \label{eq:poly-ring}
    R_{m_\bullet , n} = \CC \left[ p_{j,i}^{(k)} \mid  1\le k\le n, 1\le j\le r, 0\le i\le m_j \right].
\end{equation} 

 Let \(\mathcal{N}_{m_\bullet,n}(p_\bullet )\) be the matrix whose rows are the vectors \(\sigma_{m_\bullet}(P_k)\), for $k=1, \ldots , n$, in the standard tensor-product basis, i.e.
\begin{equation}\label{eq:Nmn}
\mathcal{N}_{m_\bullet,n}(p_\bullet ) =
\begin{pmatrix}
    \sigma_{m_\bullet } (P_1)^T \\
    \hline 
    \vdots \\
    \hline 
    \sigma_{m_\bullet } (P_n)^T
\end{pmatrix} \in 
\left( 
R_{m_\bullet , n}
\right)^{n \times \bigl(M(m_\bullet)+1\bigr) },
\end{equation}
and let $I_{m_\bullet , n} \subset R_{m_\bullet , n}$ denote the ideal of $\left( M(m_\bullet ) + 1\right) \times \left( M(m_\bullet ) + 1\right)$ minors of this matrix.

\begin{theorem}
  \label{thm:sdl-prime-cm}
  For $n \ge M(m_\bullet ) + 1,$ the maximal minor ideal $I_{m_\bullet , n}$ is prime and of the expected height $n-M(m_\bullet ),$ and the quotient ring $R_{m_\bullet , n}/I_{m_\bullet , n}$ is Cohen-Macaulay.
  Thus, by \Cref{thm:sdl-irreducibility}, the maximal minors of $\mathcal{N}_{m_\bullet,n}(p_\bullet )$ generate the vanishing ideal of $\Seg_{m_\bullet , n}.$
\end{theorem}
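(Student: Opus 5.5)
Here is the plan. I will view $\mathcal{N}_{m_\bullet,n}(p_\bullet)$ as a specialization of a generic $n\times (M+1)$ matrix, where $M:=M(m_\bullet)$. Let $S=\CC[x_{k,\alpha}]$ with $1\le k\le n$ and $\alpha$ ranging over multi-indices $(i_1,\ldots,i_r)$, $0\le i_j\le m_j$. Let $J\subset S$ be the ideal of maximal minors of the generic matrix $X=(x_{k,\alpha})$. By Eagon--Northcott, $J$ is prime and $S/J$ is Cohen--Macaulay of height $n-M$, and the Eagon--Northcott complex resolves $S/J$. The ring map $\phi: S\to R_{m_\bullet,n}$, $x_{k,\alpha}\mapsto p^{(k)}_{1,i_1}\cdots p^{(k)}_{r,i_r}$, sends $J$ onto $I_{m_\bullet,n}$.

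\textbf{Height and Cohen--Macaulayness.} The height of $I_{m_\bullet,n}$ is at most $n-M$ by the Eagon--Northcott bound. For the reverse inequality I will use \Cref{thm:sdl-irreducibility} together with the projective Nullstellensatz in the multigraded setting. The zero set of $I_{m_\bullet,n}$ in $(\P^{m_\bullet})^{\times n}$ is exactly $\Seg_{m_\bullet,n}$, since rank $\le M$ means the rows lie on a common hyperplane. In the affine cone $\operatorname{Spec} R$, the zero set is the union of the cone over $\Seg_{m_\bullet,n}$ and the loci where some vector $p^{(k)}_j$ vanishes. The cone over $\Seg$ has codimension $n-M$. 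A locus where some $p^{(k)}_j=0$ has codimension $m_j+1$, but it is contained in $V(I)$ only together with whatever conditions are forced on the remaining rows. Concretely, if row $k$ of $\mathcal N$ is zero, the other $n-1$ rows must still have rank $\le M$ whenever $n-1\ge M+1$. Inductively, such a component has codimension at least $(m_j+1)+(n-1-M)\ge n-M$ when $n-1\ge M+1$. When $n=M+1$ it has codimension $m_j+1\ge 2>1=n-M$. So $\operatorname{ht} I=n-M$. Since $S/J$ is perfect of grade $n-M$ and $\phi$ is a ring map with $\operatorname{grade}(JR)=n-M$, the Eagon--Northcott complex tensored with $R$ remains acyclic by the standard specialization theorem for perfect ideals. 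Hence $R/I$ is perfect of grade $n-M$, and therefore Cohen--Macaulay.

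\textbf{Primality.} Since $R/I$ is Cohen--Macaulay, it is unmixed, so it suffices to show that $R/I$ is generically reduced along a single minimal prime. The first step is to show there is only one minimal prime: the count above shows every component of $V(I)$ of codimension exactly $n-M$ is the cone over $\Seg$, which is irreducible by \Cref{thm:sdl-irreducibility}. For $n=M+1$ the coordinate-vanishing loci have codimension larger than one, so they are not components. For larger $n$ I must rule out coordinate-vanishing components attaining the minimal codimension $n-M$. The inequality is strict as long as $m_j+1>1$, which always holds since $m_j\ge1$, so these loci are never components.

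Next, by Serre's criterion ($R_0+S_1$), it suffices to exhibit one point of the cone over $\Seg$ at which $R/I$ is regular, i.e.\ where $I$ has the expected codimension $n-M$ in its Jacobian. I take $P_1,\ldots,P_M$ with Segre images in general linear position, spanning $H_0$, and $P_{M+1},\ldots,P_n\in\sigma^{-1}(X\cap H_0)$ general, as in the proof of \Cref{thm:sdl-irreducibility}. Near this point, the first $M$ rows of $\mathcal N$ have full rank $M$. Locally, $I$ is then generated by the $n-M$ minors formed from those $M$ rows together with row $k$, for $k>M$. Each such minor equals $\ell_{H}(\sigma(P_k))$, where $\ell_H$ is the linear form of the hyperplane spanned by the first $M$ rows. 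Its differential in the row-$k$ variables is $d(\ell_{H_0}\circ\sigma)$ at $P_k$. This differential is nonzero because $X\cap H_0$ is reduced at a general point: $H_0$ is a general hyperplane, and Bertini gives a smooth section. The differentials for distinct $k$ involve disjoint variable sets, so they are independent. Hence $R/I$ is regular of codimension $n-M$ there, and $I$ is prime.

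The step I expect to be the main obstacle is the careful height/component count for the loci where some factor vector $p^{(k)}_j$ vanishes. These loci lie in the affine cone but not in the multiprojective variety. Since the Cohen--Macaulay transfer requires $\operatorname{grade}(IR)=n-M$ in $R$ itself, they must be handled rigorously, likely by induction on $n$ using the deletion of a zero row. Finally, primality of $I$, together with $V(I)=\Seg_{m_\bullet,n}$, yields via the Nullstellensatz that $I$ is the vanishing ideal.
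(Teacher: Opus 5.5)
Your proposal is correct and shares the paper's overall architecture. The height $n-M(m_\bullet)$ comes from the dimension count in \Cref{thm:sdl-irreducibility}, Cohen--Macaulayness from Eagon--Northcott perfection, and primality from unmixedness, a unique minimal prime, and a Jacobian computation at one point.

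Two sub-steps are handled differently. The first is irreducibility of $\mathcal{V}_{\text{aff}}(I_{m_\bullet,n})$. The paper gets it from an affine incidence-variety argument ``entirely analogous'' to the projective one. You instead split the affine zero set into the cone over $\Seg_{m_\bullet,n}$ and the loci where some vector $p^{(k)}_j$ vanishes. By zero-row deletion and induction on $n$ you show the latter have codimension $n-M(m_\bullet)+m_j>n-M(m_\bullet)$, and then discard them via unmixedness. This makes explicit why the extra points of the affine cone contribute no components, with the side effect that Cohen--Macaulayness must be established before irreducibility.

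The second is the choice of point for the Jacobian. The paper uses an explicit coordinate point $q_\bullet$, where the relevant block is diagonal with entries $\pm 1$ by a cofactor computation. You take a general point and get $d(\ell_{H_0}\circ\sigma_{m_\bullet})(P_k)\neq 0$ from Bertini smoothness of $X\cap H_0$. Your version is more conceptual, but it uses one fact you left implicit: that the span $H_0$ of $M(m_\bullet)$ general points of $X$ is a general hyperplane. This holds because a general hyperplane section of $X$ spans $H$, so the span map $X^{M(m_\bullet)}\dashrightarrow(\P^{M(m_\bullet)})^*$ is dominant. Alternatively, fix a general $H_0$ first and choose all $P_k$ on $X\cap H_0$, with the first $M(m_\bullet)$ spanning it. The paper's explicit point avoids Bertini entirely and gives a directly checkable certificate, as in \Cref{ex:p111-jacobian-point}.
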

\begin{proof}
The multiprojective vanishing locus $\mathcal{V}(I_{m_\bullet , n})$ and $\Seg_{m_\bullet , n}$ are manifestly equal; in the notation of the proof of \Cref{thm:sdl-irreducibility}, both varieties are the image of the incidence variety $W$ under $\pi_1.$
Using an incidence-variety argument entirely analogous to that used in the last proof, we may see that the affine variety $\mathcal{V}_{\text{aff}} (I_{m_\bullet , n}) \subset \mathbb{C}^{n(d(m_\bullet)+r)}$ is also irreducible.

To compute the height of the ideal $I_{m_\bullet , n}$, we subtract the dimension of the affine cone over $\Seg_{m_\bullet , n}$ from the total number of variables in $R_{m_\bullet , n}$: this gives 
\begin{align}\label{eq:height}
    \height \left( I_{m_\bullet , n} \right) &=
    n(d(m_\bullet)+r) - 
    \left( 
M(m_\bullet ) + n (d(m_\bullet )-1) + nr
    \right)\\
 &=n-M(m_\bullet). \nonumber 
  \end{align}
  Since $n \ge M(m_\bullet ) + 1,$ this is the expected height for the ideal of maximal minors for a matrix of polynomials with size \(n\times(M(m_\bullet)+1)\). By the Eagon-Northcott perfection theorem (see e.g.~\cite[Theorem~2.7]{bruns88_deter_rings}), the ideal \(I_{m_\bullet , n}\) is perfect. Since \(R_{m_\bullet , n}\) is a Cohen-Macaulay ring, it follows that the quotient ring \(R_{m_\bullet , n}/I_{m_\bullet , n}\) is also Cohen-Macaulay.

It remains to show that \(I_{m_\bullet , n} \) is prime. 
 By the Unmixedness Theorem (\cite[Theorem 18.14]{eisenbud2013commutative}), all associated primes of $I_{m_\bullet , n}$ have height $n - M(m_\bullet ).$ 
 Since $\mathcal{V}_{\text{aff}} (I_{m_\bullet , n})$ is irreducible, it will suffice by the Nullstellensatz to show that $I_{m_\bullet , n}$ is radical.
We will prove this via the Jacobian criterion for reducedness of Cohen-Macaulay rings~\cite[Theorem~18.15(a)]{eisenbud2013commutative}.  We index the columns of \(\mathcal{N}_{m_\bullet,n}(p_\bullet )\) by exponent vectors 
$$
\alpha=(\alpha^{(1)},\ldots,\alpha^{(r)})\in \mathbb{Z}_{\ge 0}^{r} , \quad 0\le \alpha^{(j)}\le m_j.
$$
We associate to any such exponent vector the point
$$
P_{\alpha }
=
(e_{\alpha^{(1)}}, \ldots , e_{\alpha^{(r)}}) \in \mathbb{P}^{m_\bullet },
$$
where each $e_{\alpha^{(j)}}$ is a projective standard basis vector of the appropriate size and in the usual ordering.
Let us enumerate the points $P_\alpha $ in the lexicographic order, 
\begin{align*}
    P_{\alpha_0} &= (e_0, \ldots , e_0 ),\\  
    P_{\alpha_1} &= (e_0, \ldots , e_1),\\
    &\, \, \,  \vdots \\
    P_{\alpha_{M(m_\bullet )}} &= (e_{m_1}, \ldots , e_{m_r}), 
\end{align*}
and set 
$$
\beta = (1,0,\ldots , 0) \in \mathbb{Z}_{\ge 0}^r
\quad 
\Rightarrow 
\quad 
P_\beta = (e_1, e_0, \ldots, e_0) \in \mathbb{P}^{m_\bullet } .
$$
We now construct a point $q_\bullet \in \Seg_{m_\bullet , n}$ by collecting all but the first of these points in order and then repeating $P_{\beta}$ in the last $n - M(m_\bullet )$ factors:
$$
q_\bullet = \left( 
P_{\alpha_1},
\ldots , 
P_{\alpha_{M(m_\bullet )}},
P_{\beta},
\ldots  
P_{\beta}
\right) \in \Seg_{m_\bullet , n} .
$$
For \(k=M(m_\bullet)+1,\ldots,n\), let \(f_k\) be the maximal minor of $\mathcal{N}_{m_\bullet , n}(p_\bullet)$ which uses the first \(M(m_\bullet)\) rows together with row \(k\), and consider the Jacobian submatrix at $q_\bullet$ given by
$$
J_{q_\bullet} = \left( \displaystyle\frac{\partial f_k}{\partial p_{1,0}^{(\ell )}} (q_\bullet )\right)_{\substack{M(m_\bullet) < \ell , k \le n\\}} \in \CC^{(n - M(m_\bullet) ) \times (n - M(m_\bullet ))}.
$$
It is easy to see that $J_{q_\bullet}$ is a diagonal matrix, since the $\ell $-th row of this matrix does not involve the variables $p_{1,0}^{(\ell )}$ for $\ell > M(m_\bullet)$ with $\ell \ne k .$ 
To complete the proof, it will be enough to show that the diagonal entries of $J_{q_\bullet}$ are all $\pm 1.$ 
This can be seen by computing $f_k$ by cofactor expansion along the $k$-th row: for an appropriate choice of signs, we have
$$
f_k (p_\bullet) = 
\displaystyle\sum_{\alpha } 
\operatorname{sgn} (\alpha ) \cdot  \textrm{cof}_{\alpha , k} (p_\bullet ) \cdot  p_{1,\alpha^{(1)}}^{(k)} \cdots p_{r,\alpha^{(r)}}^{(k)}.
$$
Note that each cofactor $\textrm{cof}_{\alpha , k} (p_\bullet)$ does not involve any variables of the form $p_{j,i}^{(k)},$ i.e.~those variables appearing in the $k$-th row of $\mathcal{N}_{m_\bullet,n}(p_\bullet)$.
Thus, we may differentiate $f_k$ to obtain
\begin{equation}\label{eq:partial-derivative}
\displaystyle\frac{\partial f_k}{\partial p_{1,0}^{(k)}}
=
\displaystyle\sum_{\alpha \, : \, \alpha^{(1)} = 0} 
\operatorname{sgn} (\alpha ) \, \textrm{cof}_{\alpha , k} (p_\bullet)\, p_{2,\alpha^{(2)}}^{(k)} \cdots p_{r,\alpha^{(r)}}^{(k)}.
\end{equation}
When we evaluate the partial derivative in~\Cref{eq:partial-derivative} at $q_\bullet,$ all summands with $\alpha^{(j)} >0 $ for some $2\le j \le r$ will vanish, due to the zero-structure of $P_\beta .$
Hence 
\begin{equation}\label{eq:partial-derivative-eval}
\displaystyle\frac{\partial f_k}{\partial p_{1,0}^{(k)}} (q_\bullet)
=
\pm \, \textrm{cof}_{\alpha_0, k} (q_\bullet).
\end{equation}
Finally, it remains to observe that $\textrm{cof}_{\alpha_0, k} (q_\bullet) = \pm 1$.
To see this, simply note that by the construction of $P_{\alpha_1}, \ldots , P_{\alpha_{M(m_\bullet )}},$ the submatrix of $\mathcal{N}_{m_\bullet , n}(q_\bullet)$ indexed by columns $\alpha_1, \ldots , \alpha_{M(m_\bullet )}$ and rows $1, \ldots , M(m_\bullet )$ is simply the $M(m_\bullet )\times M(m_\bullet)$ identity matrix.
\end{proof}

\begin{example}\label{ex:p111-jacobian-point}
  We illustrate the Jacobian calculation above in the case \(m_\bullet=(1,1,1)\) and
  \(n=9\).  Here \(d(m_\bullet)=3\), \(r=3\), \(M(m_\bullet)=7\), \( \dim( R_{m_\bullet , n}) =9(3+3)=54\), and hence
  \[
    \height (I_{m_\bullet , n}) = n-M(m_\bullet)=9-7=2.
  \]
    We order the columns of the matrix $\mathcal{N}_{m_
    \bullet , n}(p_\bullet)$ lexicographically, 
  \begin{align*}
    \alpha_0 = (0,0,0), 
    \quad  
    \alpha_1 = (0,0,1),
    &\quad 
    \alpha_2 = (0,1,0), 
    \quad  
    \alpha_3 = (0,1,1),
    \\\ 
    \alpha_4 = (1,0,0), 
    \quad 
    \alpha_5 = (1,0,1),
    &\quad 
    \alpha_6 = (1,1,0), 
    \quad 
    \alpha_7 = (1,1,1),  
  \end{align*}
so that
\begin{equation}
\mathcal{N}_{m_
    \bullet , n} (p_\bullet)
    =
{    \scriptsize 
    \begin{pmatrix}
    p_{1,0}^{(1)}p_{2,0}^{(1)}p_{3,0}^{(1)}&
    p_{1,0}^{(1)}p_{2,0}^{(1)}p_{3,1}^{(1)}&
    p_{1,0}^{(1)}p_{2,1}^{(1)}p_{3,0}^{(1)}&
    p_{1,0}^{(1)}p_{2,1}^{(1)}p_{3,1}^{(1)}&
    p_{1,1}^{(1)}p_{2,0}^{(1)}p_{3,0}^{(1)}&
    p_{1,1}^{(1)}p_{2,0}^{(1)}p_{3,1}^{(1)}&
    p_{1,1}^{(1)}p_{2,1}^{(1)}p_{3,0}^{(1)}&
    p_{1,1}^{(1)}p_{2,1}^{(1)}p_{3,1}^{(1)}\\
    \vdots & \vdots & \vdots & \vdots &   \vdots & \vdots & \vdots & \vdots \\
        p_{1,0}^{(9)}p_{2,0}^{(9)}p_{3,0}^{(9)}&
    p_{1,0}^{(9)}p_{2,0}^{(9)}p_{3,1}^{(9)}&
    p_{1,0}^{(9)}p_{2,1}^{(9)}p_{3,0}^{(9)}&
    p_{1,0}^{(9)}p_{2,1}^{(9)}p_{3,1}^{(9)}&
    p_{1,1}^{(9)}p_{2,0}^{(9)}p_{3,0}^{(9)}&
    p_{1,1}^{(9)}p_{2,0}^{(9)}p_{3,1}^{(9)}&
    p_{1,1}^{(9)}p_{2,1}^{(9)}p_{3,0}^{(9)}&
    p_{1,1}^{(9)}p_{2,1}^{(9)}p_{3,1}^{(9)}
    \end{pmatrix}
    }.\label{eq:N-specific}
\end{equation}
In the notation of the proof, we have $\beta = \alpha_4 = (1,0,0),$ and $q_\bullet \in \Seg_{(1,1,1) , 9}$ is given as follows: 
\begin{align*}
P_1 = \left( [1:0], [1:0], [0:1] \right), &\quad 
P_2 = \left( [1:0], [0:1], [1:0] \right),
\quad 
P_3 = \left( [1:0], [0:1], [0:1] \right),\nonumber \\
P_5 = \left( [0:1], [1:0], [0:1] \right),
&\quad 
P_6 = \left( [0:1], [0:1], [1:0] \right),
\quad 
P_7 = \left( [0:1], [0:1], [0:1] \right), \nonumber \\
P_4 = P_8 = P_9 = \left( [0:1], [1:0], [1:0] \right),
&\quad 
q_\bullet = (P_1, P_2, P_3, P_4, P_5, P_6, P_7, P_4, P_4).
\end{align*}
Evaluating the matrix in~\Cref{eq:N-specific} at $q_\bullet$ gives
\begin{equation}
    \label{eq:N-specific-eval}
    \mathcal{N}_{m_
    \bullet , n} (q_\bullet) = 
    \begin{pmatrix}
        0 & 1 & 0 & 0 & 0 & 0 & 0 & 0 \\
        0 & 0 & 1 & 0 & 0 & 0 & 0 & 0 \\
        0 & 0 & 0 & 1 & 0 & 0 & 0 & 0 \\
        0 & 0 & 0 & 0 & 1 & 0 & 0 & 0 \\
        0 & 0 & 0 & 0 & 0 & 1 & 0 & 0 \\
        0 & 0 & 0 & 0 & 0 & 0 & 1 & 0 \\
        0 & 0 & 0 & 0 & 0 & 0 & 0 & 1 \\
        0 & 0 & 0 & 0 & 1 & 0 & 0 & 0 \\
        0 & 0 & 0 & 0 & 1 & 0 & 0 & 0 
    \end{pmatrix}
\end{equation}
For the maximal minors $f_1, f_2 \in I_{(1,1,1), 9}$ defined by deleting rows $9$ and $8$ from~\Cref{eq:N-specific}, respectively,
a straightforward computation gives
\begin{equation}
J_{q_\bullet} = 
\begin{pmatrix}
    \displaystyle\frac{\partial f_1}{\partial p_{1,0}^{(8)}} (q_\bullet) &  
    \displaystyle\frac{\partial f_2}{\partial p_{1,0}^{(8)}} (q_\bullet)
    \\
        \displaystyle\frac{\partial f_1}{\partial p_{1,0}^{(9)}} (q_\bullet) &  
    \displaystyle\frac{\partial f_2}{\partial p_{1,0}^{(9)}} (q_\bullet)
\end{pmatrix}
= \begin{pmatrix}
    -1 & 0 \\
    0 & -1 
\end{pmatrix},
\end{equation}
and thus $\operatorname{rank} (J_{q_\bullet}) = \height (I_{m_\bullet , n})=2.$
\end{example}

In view of \Cref{rem:higher-codimension}, it is natural to ask to what extent \Cref{thm:sdl-irreducibility,thm:sdl-prime-cm} can be generalized to non-maximal minor ideals for the Segre-determinantal matrix in~\Cref{eq:Nmn}.
We close this section with an example showing that such a generalization would not be completely straightforward.

\begin{example}[A non-prime ideal of \(4\times 4\) minors]
  \label{ex:p111-four-minors-not-prime}
  As before, we take \(m_\bullet=(1,1,1)\), so \(M(m_\bullet)=7\).
For $n\ge 4,$ the $4\times 4$ minors of the $n\times 8$ matrix in~\Cref{eq:Nmn} do not generate a prime ideal.
To see this, we consider the \(4\times4\) minor \(\Delta\) that uses the first four rows and the first four columns of $\mathcal{N}_{m_\bullet , n}(p_\bullet)$,
  \[
    \Delta=
    \det\begin{pmatrix}
      p_{1,0}^{(1)}p_{2,0}^{(1)}p_{3,0}^{(1)} & p_{1,0}^{(1)}p_{2,0}^{(1)}p_{3,1}^{(1)} & p_{1,0}^{(1)}p_{2,1}^{(1)}p_{3,0}^{(1)} & p_{1,0}^{(1)}p_{2,1}^{(1)}p_{3,1}^{(1)} \\
      p_{1,0}^{(2)}p_{2,0}^{(2)}p_{3,0}^{(2)} & p_{1,0}^{(2)}p_{2,0}^{(2)}p_{3,1}^{(2)} & p_{1,0}^{(2)}p_{2,1}^{(2)}p_{3,0}^{(2)} & p_{1,0}^{(2)}p_{2,1}^{(2)}p_{3,1}^{(2)} \\
      p_{1,0}^{(3)}p_{2,0}^{(3)}p_{3,0}^{(3)} & p_{1,0}^{(3)}p_{2,0}^{(3)}p_{3,1}^{(3)} & p_{1,0}^{(3)}p_{2,1}^{(3)}p_{3,0}^{(3)} & p_{1,0}^{(3)}p_{2,1}^{(3)}p_{3,1}^{(3)} \\
      p_{1,0}^{(4)}p_{2,0}^{(4)}p_{3,0}^{(4)} & p_{1,0}^{(4)}p_{2,0}^{(4)}p_{3,1}^{(4)} & p_{1,0}^{(4)}p_{2,1}^{(4)}p_{3,0}^{(4)} & p_{1,0}^{(4)}p_{2,1}^{(4)}p_{3,1}^{(4)}
    \end{pmatrix}.
  \]
Using multilinearity of the determinant in rows, this factors as 
  \[
\Delta = p_{1,0}^{(1)}\cdot p_{1,0}^{(2)}\cdot p_{1,0}^{(3)}\cdot p_{1,0}^{(4)} 
\cdot 
    \det\begin{pmatrix}
p_{2,0}^{(1)}p_{3,0}^{(1)} & p_{2,0}^{(1)}p_{3,1}^{(1)} & p_{2,1}^{(1)}p_{3,0}^{(1)} & p_{2,1}^{(1)}p_{3,1}^{(1)} \\
p_{2,0}^{(2)}p_{3,0}^{(2)} & p_{2,0}^{(2)}p_{3,1}^{(2)} & p_{2,1}^{(2)}p_{3,0}^{(2)} & p_{2,1}^{(2)}p_{3,1}^{(2)} \\
p_{2,0}^{(3)}p_{3,0}^{(3)} & p_{2,0}^{(3)}p_{3,1}^{(3)} & p_{2,1}^{(3)}p_{3,0}^{(3)} & p_{2,1}^{(3)}p_{3,1}^{(3)} \\
p_{2,0}^{(4)}p_{3,0}^{(4)} & p_{2,0}^{(4)}p_{3,1}^{(4)} & p_{2,1}^{(4)}p_{3,0}^{(4)} &
p_{2,1}^{(4)}p_{3,1}^{(4)}
    \end{pmatrix}.
  \]
   By degree considerations, none of the factors belong to the $4\times 4$ minor ideal. Hence this ideal is not prime.
\end{example}

\section{Universal Gr\"obner Bases of Segre-Determinantal Loci} \label{sec:ugb}

In this section, we prove that the maximal Segre-determinantal minors form a universal Gr\"{o}bner basis. 
Recall that a \emph{universal Gr\"{o}bner basis} for an ideal $I\subset \mathbb{C}[x_1,\ldots , x_n]$ is a Gr\"{o}bner basis for all possible monomial orders.
Universal Gr\"{o}bner bases always exist, but are generally quite difficult to compute---nevertheless, they have been determined for some especially nice determinantal ideals, such as \Cref{ex:2x2-nonexample,ex:maximal-example} below.
Our result utilizes a combinatorial criterion for universal Gr\"{o}bner bases developed by Huang and Larson, stated as \Cref{thm:larson-huang} below, whose statement requires some rudiments of simplicial complexes and matroid theory.

An \emph{abstract simplicial complex} $\Delta $ over a finite ground set $S$ is a downward-closed family of subsets of $S$; that is, $\Delta \subset 2^S$ such that $A \in \Delta $ whenever $A\subset B$ and $B\in \Delta .$
Usually we simply refer to $\Delta $ as a \emph{complex}.
The elements of $\Delta $ are \emph{faces}, and inclusion-maximal faces are \emph{facets.}

The complexes that interest us will mainly be \emph{matroidal independence complexes}.
This means complexes $\Delta $ which satisfy the \emph{exchange axiom}, which states that if $A ,B \in \Delta $ with $ |A| < |B|,$ then there exists an $a \in B \setminus A$ with $A \cup \{ a \} \in \Delta .$
It is a standard fact of matroid theory that independence complexes are \emph{pure}, meaning that all facets have the same cardinality.
The facets of an independence complex are known as \emph{bases}.
A \emph{matroid} may be defined in several equivalent ways: for instance, via its independence complex, or via its bases, or via its \emph{circuits}, which may be understood as the minimal non-faces of a matroidal independence complex.

We now introduce two complexes that play main roles in our results.  The first complex is defined via algebraic data. If $f_1, \ldots , f_k \in \mathbb{C} [x_1, \ldots , x_n]$ are nonzero polynomials whose terms are all squarefree monomials, we say that $F = \{ f_1, \ldots , f_k \}$ is a \emph{squarefree set of polynomials.} 

\begin{definition}\label{def:spread-complex}
The \emph{spread complex} $\Delta (F)$ associated to a squarefree set of polynomials $F = \{ f_1, \ldots , f_k \}$ is the complex on the ground set of variables $S = \{ x_1, \ldots , x_n \}$ whose minimal non-faces are the \emph{spreads} (aka supports),
\begin{equation}
    \label{eq:spread}
    \spr (f_i ) = \{ x_j \text{ appears in some monomial of } f_i \} , \quad 1\le i \le k.
\end{equation}
\end{definition}

\begin{example}
  \label{ex:flatland-spreads}
  Take \(m_\bullet=(1,1,1)\).  Then \(M(m_\bullet)=7\), so the maximal minors of the matrix \(\mathcal{N}_{(1,1,1),n} (p_\bullet) \) are \(8\times 8\). The variables in the \(k\)th row are
  \[
    p_{1,0}^{(k)},\ p_{1,1}^{(k)},\
    p_{2,0}^{(k)},\ p_{2,1}^{(k)},\
    p_{3,0}^{(k)},\ p_{3,1}^{(k)}.
  \]
  If \(K\subset [n]\) is a set of eight rows, then the corresponding maximal minor uses only the rows indexed by \(K\).  Its spread is exactly
  \[
    \left\{
      p_{j,i}^{(k)}
      \;\middle|\;
      k\in K,\ 1\le j\le 3,\ 0\le i\le 1
    \right\},
  \]
  the set of all six variables from each of the eight selected rows.

  For instance, when \(n=9\), consider the set \(U\) containing all variables from rows \(1,\ldots,7\), all variables from row \(8\) except \(p_{1,0}^{(8)}\), and all variables from row \(9\) except \(p_{1,0}^{(9)}\).  This set contains no spread of a maximal minor because a spread requires eight complete rows, while \(U\) has only seven complete rows. However, adding either omitted variable completes one of rows \(8\) or \(9\), and then \(U\) contains the spread of an \(8\times 8\) minor.  Thus \(U\) is a facet of the spread complex.  Its size is
  \[
    7\cdot 6+2\cdot 5=52
    =
    M(m_\bullet)+n\bigl(d(m_\bullet)+r-1\bigr)
    =
    7+9\cdot 5.
  \]
\end{example}

The second complex that interests us is a much more geometric object---the algebraic matroid of an irreducible embedded affine variety. 
If we are given such a variety $X \subset \mathbb{C}^n$, then the field of rational functions on $X$ is an image of the $n$-variable rational function field under a restriction map
\begin{equation}\label{eq:restriction-to-X}
\operatorname{res}_X : \mathbb{C} (x_1, \ldots , x_n) \to  \mathbb{C} (X ).
\end{equation}

\begin{definition}\label{def:alg-matroid}
Given an irreducible subvariety of affine space $X \subset \mathbb{C}^n$, we define the \emph{algebraic matroid} $\Delta (X )$ to be the matroidal independence complex on the ground set of variables $\{ x_1, \ldots , x_n \} $ whose faces are the subsets of variables whose images under $\operatorname{res}_X$ are algebraically independent.
\end{definition}

The fact that $\Delta (X)$ is a matroidal independence complex is a standard algebraic fact. 
More geometrically, the faces of $\Delta (X)$ may be characterized in terms of projections as follows: for any subset $U \subset [n] := \{ 1, \ldots , n\}$, we let 
\begin{align}
    \pi_U : X &\to \mathbb{C}^{|U|} \nonumber \\
    (x_1, \ldots , x_n ) &\mapsto (x_i \mid i \in U)
\end{align}
denote projection onto the coordinates indexed by $U.$ We then have that $U \in \Delta (X)$ if and only if $\pi_U$ is dominant.

The following result provides sufficient conditions for a squarefree set of polynomials $F$ vanishing on irreducible $X \subset \mathbb{C}^n$ to form a \emph{universal Gr\"{o}bner basis} for the vanishing ideal $\mathcal{I} (X)$---that is, a Gr\"{o}bner basis for all possible term orders on the polynomial ring $\mathbb{C} [x_1, \ldots , x_n].$ 

\begin{theorem}\label{thm:larson-huang}\cite{huang2024fine}
Let $X \subset \mathbb{C}^n$ be an irreducible closed subvariety of affine space, and let $F \subset \mathcal{I}(X)$ be a squarefree set of polynomials vanishing on $X.$ If the algebraic matroid of $X$ and the spread complex are equal, i.e. if~$\Delta (X) = \Delta (F) $, then $F$ forms a universal Gr\"{o}bner basis of $\mathcal{I}(X).$ 
\end{theorem}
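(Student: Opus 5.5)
The plan is to fix an arbitrary term order $<$ on $\mathbb{C}[x_1,\ldots,x_n]$ and compare two monomial ideals: the initial ideal $\operatorname{in}_<(\mathcal{I}(X))$, and the ideal $J_< := \langle \operatorname{in}_<(f) : f\in F\rangle$. Since $F\subset\mathcal{I}(X)$, we have $J_<\subseteq \operatorname{in}_<(\mathcal{I}(X))$, and the goal is the reverse inclusion. Because every $f\in F$ is squarefree, $J_<$ is a squarefree monomial ideal. Hence $J_<$ is the Stanley--Reisner ideal of a complex $\Delta_<(F)$, whose minimal non-faces are the supports of the monomials $\operatorname{in}_<(f)$. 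Write $d=\dim X$.

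\textbf{Step 1: comparison of complexes.} Each $\operatorname{supp}(\operatorname{in}_<(f))$ is contained in $\spr(f)$. So every face of $\Delta_<(F)$ avoids all spreads, giving $\Delta_<(F)\subseteq\Delta(F)=\Delta(X)$. Let $\Gamma_<$ be the Stanley--Reisner complex of $\sqrt{\operatorname{in}_<(\mathcal{I}(X))}$. If $U\in\Gamma_<$, then no monomial in $x_U$ is a leading monomial, so $\mathcal{I}(X)\cap\mathbb{C}[x_U]=0$ and $U\in\Delta(X)$. From $J_<\subseteq\operatorname{in}_<(\mathcal{I}(X))$ we get $\Gamma_<\subseteq\Delta_<(F)$. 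By the Kalkbrener--Sturmfels theorem, $\Gamma_<$ is pure of dimension $d$ and its facets are bases of $\Delta(X)$. Altogether
$$\Gamma_<\subseteq\Delta_<(F)\subseteq\Delta(X).$$
In particular every face of $\Delta_<(F)$ has at most $d$ elements. Every facet of $\Delta_<(F)$ contains a facet of $\Gamma_<$, which already has $d$ elements, so $\Delta_<(F)$ is pure with facets that are bases of $\Delta(X)$. Consequently $J_<$ is radical and equidimensional of dimension $d$, with minimal primes $P_{B^c}=\langle x_j : j\notin B\rangle$ for the facets $B$ of $\Delta_<(F)$.

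\textbf{Step 2: reduction to a multiplicity statement.} Since $J_<=\bigcap_B P_{B^c}$ is unmixed and $J_<\subseteq\operatorname{in}_<(\mathcal{I}(X))$, it suffices to show that $\operatorname{in}_<(\mathcal{I}(X))\subseteq P_{B^c}$ for every facet $B$ of $\Delta_<(F)$. Equivalently, $\Gamma_<=\Delta_<(F)$: no standard monomial supported on a basis $B\in\Delta_<(F)$ lies in the initial ideal. I would get this from degrees. Gr\"obner degeneration preserves the Hilbert polynomial, so
$$\deg X=e\bigl(\operatorname{in}_<(\mathcal{I}(X))\bigr)=\sum_{B\in\Gamma_<}\operatorname{mult}_{P_{B^c}}\ \ge\ \#\{\text{facets of }\Gamma_<\}.$$
On the other side, $e(J_<)=\#\{\text{facets of }\Delta_<(F)\}$. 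If $\deg X=e(J_<)$, then $e(\operatorname{in}_<(\mathcal{I}(X)))=e(J_<)$. In that case no minimal prime of $J_<$ can fail to contain $\operatorname{in}_<(\mathcal{I}(X))$, since localizing there would drop the multiplicity. This forces equality.

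\textbf{Step 3: the key local computation (main obstacle).} It remains to control $\deg X$, or, more directly, to show that each facet $B$ of $\Delta_<(F)$ is a facet of $\Gamma_<$. Here the hypothesis $\Delta(X)=\Delta(F)$ enters decisively. For $x\notin B$, the set $B\cup\{x\}$ is dependent, so it contains a circuit of $\Delta(X)$, which by hypothesis is a spread $\spr(f_x)$ for some $f_x\in F$. Since $f_x$ is squarefree with support in $B\cup\{x\}$, we can write $f_x=a_x\,x+b_x$ with $a_x,b_x\in\mathbb{C}[x_B]$ and $a_x\neq0$. Moreover $\operatorname{in}_<(f_x)$ must involve $x$, because $B\in\Delta_<(F)$.

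Thus over $\mathbb{C}(x_B)$ every coordinate is a rational function of $x_B$, so the projection $\pi_B$ is birational onto $\mathbb{C}^B$. I would then try to show that the component $P_{B^c}$ of $\operatorname{in}_<(\mathcal{I}(X))$ appears with multiplicity exactly the degree of $\pi_B$, i.e.\ $1$, in the Kalkbrener--Sturmfels decomposition. Alternatively, I would argue directly that the polynomials $f_x$, whose leading terms are divisible by the distinct variables $x\notin B$, reduce any $g\in\mathcal{I}(X)$ to a remainder in $\mathbb{C}[x_B]\cap\mathcal{I}(X)=0$ without producing a leading monomial supported on $B$. That would show that $P_{B^c}$ contains $\operatorname{in}_<(\mathcal{I}(X))$.

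Making this step rigorous for an arbitrary, non-elimination term order is where I expect the real difficulty. If the direct division argument fails, the fallback is to verify the degree identity $\deg X=\#\{\text{facets of }\Delta_<(F)\}$ through the Kalkbrener--Sturmfels multiplicity formula.
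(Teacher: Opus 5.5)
The paper does not prove this statement. It quotes it from \cite{huang2024fine} and points to \cite{duff2025universal} for a simpler proof of this special case, so your proposal has to stand on its own.

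Your setup is correct: $J_<\subseteq\operatorname{in}_<(\mathcal{I}(X))$ and $\Gamma_<\subseteq\Delta_<(F)\subseteq\Delta(X)$. Since $J_<$ is squarefree, hence radical, the theorem is equivalent to $\Delta_<(F)\subseteq\Gamma_<$, and this reduction needs no unmixedness. But that inclusion is the entire content of the theorem, and your proposal does not prove it.

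The purity claim at the end of Step 1 is also a non sequitur. $\Gamma_<\subseteq\Delta_<(F)$ says every facet of $\Gamma_<$ lies in some facet of $\Delta_<(F)$, not that every facet of $\Delta_<(F)$ contains a facet of $\Gamma_<$. A priori a facet $B$ of $\Delta_<(F)$ is only independent. Since $\Gamma_<$ is pure, the theorem itself forces $|B|=d$, so purity is part of what must be proved. This also breaks the opening of Step 3. If $|B|<d$ and $B\cup\{x\}$ is independent, the $f\in F$ witnessing $B\cup\{x\}\notin\Delta_<(F)$ only has $\operatorname{supp}(\operatorname{in}_<(f))\subseteq B\cup\{x\}$. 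Its spread may leave $B\cup\{x\}$, so no $f_x=a_xx+b_x$ with $a_x,b_x\in\CC[x_B]$ need exist.

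Step 2 fails for term orders that are not degree-compatible, and a universal Gr\"obner basis must handle those. Take $X=\mathcal{V}(xy+z)\subset\CC^3$ and $F=\{xy+z\}$. Here $\Delta(X)=\Delta(F)$ consists of all proper subsets of $\{x,y,z\}$, and $\deg X=2$. But for lex with $z$ largest, $\operatorname{in}_<(\mathcal{I}(X))=J_<=\langle z\rangle$ has multiplicity $1$. So $\deg X=e(\operatorname{in}_<(\mathcal{I}(X)))$ is false, and $\deg X=e(J_<)$ fails even though the conclusion holds.

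The same example shows Step 3 cannot rest on birationality. Your observation gives $\deg\pi_B=1$ for \emph{every} basis, including $\{x,z\}$ and $\{y,z\}$, yet $\langle y\rangle$ and $\langle x\rangle$ are not components of $\langle z\rangle$. In general, comparing closures in $(\P^1)^n$, a Gr\"obner degeneration only gives the upper bound $\operatorname{mult}_{P_{B^c}}(\operatorname{in}_<(\mathcal{I}(X)))\le\deg\pi_B$. You need the lower bound $\ge 1$ for each facet of $\Delta_<(F)$.

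The division fallback fails as stated. Dividing by the $f_x$ removes only terms divisible by $x\cdot\operatorname{in}_<(a_x)$, so the remainder can still involve $x\notin B$. Pseudo-division gives $a^Ng\in\langle f_x : x\notin B\rangle$, but then you must show this ideal has no element whose leading monomial is supported on $B$, which is essentially the original problem.

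What is missing is an invariant preserved by \emph{all} term orders, matched by a combinatorial computation on leading terms. One candidate is to multihomogenize to $(\P^1)^n$. There the ideal is homogeneous, every term order lifts, and the multidegree has coefficients $\deg\pi_B\in\{0,1\}$, so your Step 2 bookkeeping becomes legitimate. You would still have to prove that the lifted $J_<$ is unmixed of codimension $n-d$ with degree equal to the number of bases of $\Delta(X)$. That is exactly where $\Delta(X)=\Delta(F)$ must enter through the leading terms, and the proposal does not address it.
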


\Cref{thm:larson-huang}, as stated above, is a special case of a more general universal Gr\"{o}bner basis criterion due to Huang and Larson~\cite{huang2024fine}, which allows for more general varieties over arbitrary fields.
The special case stated above is sufficient for our purposes; a somewhat simpler proof in this case is given in ~\cite{duff2025universal}.

The main difficulty in applying \Cref{thm:larson-huang} is proving that the spread complex is contained in the algebraic matroid,
\begin{equation}\label{eq:hard-inclusion}
\Delta (F ) \subset \Delta (X).
\end{equation}
since the reverse containment
\begin{equation}\label{eq:easy-inclusion}
\Delta (X ) \subset \Delta (F)
\end{equation}
always holds when the other hypotheses of \Cref{thm:larson-huang} are satisfied.
To see the inclusion in~\Cref{eq:easy-inclusion}, it is equivalent to observe that any non-face of the spread complex must also be a non-face of the algebraic matroid.
Indeed, if $U \notin \Delta (F)$, then $U$ contains some spread, $\spr (f_i) \subset U.$ Since $f_i\in \mathcal{I}(X)$ is a nonzero polynomial vanishing on $X$, the coordinates indexed by $U$ satisfy a nontrivial algebraic relation on $X$, and hence $\pi_U$ is not a dominant map. Thus $U \notin \Delta(X)$.

Since $\Delta (F)$ and $\Delta (X)$ are both downward-closed, one can see that establishing the inclusion in~\Cref{eq:hard-inclusion} on the level of facets is sufficient to show that $F$ is a universal Gr\"{o}bner basis.
Thus, we obtain the following simplification of \Cref{thm:larson-huang}.

\begin{corollary}\label[corollary]{cor:thm-restated}
Let $X \subset \mathbb{C}^n$ be an irreducible closed subvariety of affine space, and let $F \subset \mathcal{I}(X)$ be a squarefree set of polynomials vanishing on $X.$ If every facet of the spread complex $\Delta (F) $ is a basis in the algebraic matroid $\Delta (X)$, then $F$ forms a universal Gr\"{o}bner basis of $\mathcal{I}(X).$ 
\end{corollary}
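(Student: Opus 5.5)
The plan is to reduce \Cref{cor:thm-restated} to \Cref{thm:larson-huang} by showing that the hypothesis ``every facet of $\Delta(F)$ is a basis of $\Delta(X)$'' forces the equality $\Delta(X) = \Delta(F)$. Equality is established by proving the two inclusions separately; the discussion preceding the corollary already supplies one of them.

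First, the inclusion $\Delta(X) \subset \Delta(F)$ in~\Cref{eq:easy-inclusion} holds unconditionally under the standing hypotheses. Suppose $U \notin \Delta(F)$. Then $U$ contains some spread $\spr(f_i)$. The polynomial $f_i$ is nonzero, lies in $\mathcal{I}(X)$, and involves only variables from $U$. Hence the images of the variables in $U$ under $\operatorname{res}_X$ satisfy a nontrivial algebraic relation, and therefore $U \notin \Delta(X)$.

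Second, for the inclusion $\Delta(F) \subset \Delta(X)$, take any face $A \in \Delta(F)$. Since $\Delta(F)$ is a finite downward-closed family, $A$ is contained in some facet $B$ of $\Delta(F)$. To see this, enlarge $A$ one variable at a time while staying inside $\Delta(F)$; the process stops because the ground set is finite. By hypothesis $B$ is a basis of $\Delta(X)$, so in particular $B \in \Delta(X)$. Because $\Delta(X)$ is downward-closed, $A \in \Delta(X)$. This uses only the independence half of ``basis''; maximality is not needed here.

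Combining the two inclusions gives $\Delta(X) = \Delta(F)$, and \Cref{thm:larson-huang} then yields that $F$ is a universal Gr\"obner basis of $\mathcal{I}(X)$.

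There is no real obstacle. The only point requiring care is to make sure that downward-closure of $\Delta(X)$ is legitimate, and it is: any subset of an algebraically independent set is algebraically independent. One should also note the degenerate case where $\Delta(F)$ has the empty set as its only facet (if some $f_i$ were constant). This case cannot occur, since a nonzero constant does not vanish on the nonempty variety $X$; in any event the argument above covers it verbatim.
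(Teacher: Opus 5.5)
Your proposal is correct and follows the same route as the paper: the inclusion $\Delta(X)\subset\Delta(F)$ holds automatically, because a nonzero $f_i\in\mathcal{I}(X)$ gives an algebraic relation among the variables of any set containing $\spr(f_i)$. Downward-closure of both complexes then reduces $\Delta(F)\subset\Delta(X)$ to a check on facets, after which \Cref{thm:larson-huang} applies; you simply write out the facet-extension and downward-closure steps that the paper leaves implicit.
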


To apply this universal Gr\"{o}bner basis criterion, we will determine both the facets in the spread complex associated to the Segre-determinantal minors and the bases in the algebraic matroid of the Segre-determinantal locus.

As a warm-up, we describe how this criterion may be applied in the context of more classical determinantal ideals.

\begin{example}\label{ex:2x2-nonexample}
Suppose $F$ is the set of $2\times 2$ minors of a $3\times 3$ matrix of distinct indeterminates,
$$
P = \begin{bmatrix}
    p_{11} & p_{12} & p_{13}\\
    p_{21} & p_{22} & p_{23} \\
    p_{31} & p_{32} & p_{33}
\end{bmatrix},
$$
$$
F = \{ 
\det (P_{ij, k\ell })
\mid 
1\le i < j \le 3,
\, 
1\le k < \ell \le 3
\} .
$$
The spread complex $\Delta (F)$ contains facets of different sizes.
For example, 
$$U = \{ p_{13}, p_{23}, p_{31}, p_{32}, p_{33} \}  \in \Delta (F)$$ 
is a facet of size $5$, complementary to $\spr \left( \det (P_{12, 12}) \right)$, while 
$$
U' = \{ p_{12}, p_{13}, p_{21}, p_{23}, p_{31}, p_{32} \} \in \Delta (F) 
$$
is a facet of size $6$, obtained by deleting the diagonals of $P.$
Since all bases of a matroid must have the same size, these observations imply that $\Delta (F)$ is \emph{not} the algebraic matroid of the variety $X = \mathcal{V} (F) \subset \mathbb{C}^9$.
For the general $2\times 2$ minor ideal for a matrix of distinct indeterminate entries, 
there is a well-known construction of a
universal Gr\"{o}bner basis 
(see e.g.~\cite{villarreal2001monomial},~\cite[Ch.~5]{bruns2022determinants}), which consists of both the original minors and certain degree-$4$ binomials.
This construction may also be verified using \Cref{thm:larson-huang}---see~\cite{huang2024fine} for details.
\end{example}

\begin{example}\label{ex:maximal-example}
The maximal minors of an $m\times n$ matrix of distinct indeterminates $P$ are known to form a universal Gr\"{o}bner basis.
This result dates back to work of Bernstein, Sturmfels, and Zelevinsky~\cite{BZ93,SZ93}---see also~\cite[Ch.~5]{bruns2022determinants} for a more recent perspective.
\Cref{thm:ugb} below generalizes this result to the maximal minor ideals defining Segre-determinantal loci.
As a warm-up, we sketch a short proof of the Bernstein-Sturmfels-Zelevinsky result based on \Cref{thm:larson-huang}.

Assuming $m\ge n$, the spreads of maximal minors are simply all $n\times n$ submatrices of $P$. 
Thus, any facet $U$ in the spread complex must  be a union built from $n-1$ complete rows of $P$ and $m-n+1$ incomplete rows with a single entry deleted from each.
Given $\tilde{p}\in \mathbb{C}^{|U|},$ points in the fiber $P\in \pi_U^{-1} (\tilde{p})$ are solutions to a \emph{low-rank matrix completion problem}, meaning that $P$ is a matrix of rank less than $n$ with entries partially specified by $\widetilde{p}.$ 
For generic $\widetilde{p},$ the entries deleted to form $U$ can be set so that there is a unique completion $P$ of rank $n-1.$
This shows the map $\pi_U$ is dominant.
We conclude, using \Cref{cor:thm-restated}, that the maximal minors form a universal Gr\"{o}bner basis.
\end{example}

\begin{remark}\label{ugb-remark}
\Cref{ex:2x2-nonexample,ex:maximal-example} illustrate two cases where the algebraic matroid of the variety of bounded-rank matrices can be characterized as the spread complex of a set of squarefree polynomials.
Fairly little is known about the algebraic matroid of bounded-rank matrices in general: we refer to~\cite{tsakiris2024results,nicklasson2025determinantal,bernstein2017completion,brakensiek2026rigidity}, however, for further results on this subject.
Universal Gr\"{o}bner bases beyond the maximal and $2\times 2$ cases are also not well-understood.
\end{remark}

We now prove the main theorem of this section.

\begin{theorem}\label{thm:ugb}
For $n > M(m_\bullet ),$ the maximal minors of the matrix $\mathcal{N}_{m_\bullet , n} (p_\bullet)$  form a universal Gr\"{o}bner basis for the vanishing ideal of $\Seg_{m_\bullet , n}.$
\end{theorem}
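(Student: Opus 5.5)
We plan to apply \Cref{cor:thm-restated} with $X = \mathcal{V}_{\text{aff}}(I_{m_\bullet,n}) \subset \CC^{n(d(m_\bullet)+r)}$, the affine cone over $\Seg_{m_\bullet,n}$. By \Cref{thm:sdl-prime-cm}, $X$ is irreducible with $\mathcal{I}(X) = I_{m_\bullet,n}$, and the set $F$ of maximal minors vanishes on $X$. Each minor is squarefree: its entries are products of one variable from each of $r$ disjoint groups within a row, and distinct rows use disjoint variables.

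\textbf{Step 1: the spread complex.} The spread of the minor on a row set $K$ (with $|K| = M(m_\bullet)+1$) is the set of all $d(m_\bullet)+r$ variables in the rows indexed by $K$. To see that every such variable occurs, expand along row $k$. For a generic specialization of the other rows, the complementary cofactors are linearly independent, and no cancellation occurs. Consequently a set $U$ of variables is a face of $\Delta(F)$ exactly when at most $M(m_\bullet)$ rows are fully contained in $U$. A facet therefore consists of exactly $M(m_\bullet)$ complete rows, together with the remaining $n - M(m_\bullet)$ rows each missing exactly one variable. This generalizes \Cref{ex:flatland-spreads}, and every facet has size $M(m_\bullet) + n(d(m_\bullet)+r-1)$. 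This equals $\dim X = M(m_\bullet) + n(d(m_\bullet)-1) + nr$ from \Cref{eq:height}. It therefore suffices to show that $\pi_U \colon X \to \CC^{|U|}$ is dominant for each facet $U$; equivalently, that the generic fiber is finite and nonempty.

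\textbf{Step 2: dominance.} Let $U$ have complete rows $K_0$ and, for each $k \notin K_0$, a missing variable $p^{(k)}_{j_k,i_k}$.
\begin{itemize}
\item Prescribe generic values for all variables in $U$. The $M(m_\bullet)$ complete rows give generic points of the Segre variety (in affine-cone coordinates), so their Segre vectors are linearly independent and span a unique hyperplane $H$, with normal vector $h \in \CC^{M(m_\bullet)+1}$.
\item For each remaining row $k$, the condition $\langle h, \sigma(P_k)\rangle = 0$ is affine-linear in the single missing variable $x = p^{(k)}_{j_k,i_k}$, because $\sigma(P_k)$ is multilinear across the groups: it reads $a_k x + b_k = 0$. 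Here $a_k$ is the pairing of $h$ with the tensor product of the known vectors in the other groups and the basis vector $e_{i_k}$ in group $j_k$.
\item If $a_k \neq 0$ for generic data, there is a unique solution $x$. The completed tuple then has rank at most $M(m_\bullet)$, so it lies in $X$, and each fiber of $\pi_U$ is a single point. This proves dominance.
\end{itemize}

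\textbf{Main obstacle: showing $a_k$ is not identically zero.} Since $a_k$ is a polynomial in generic data, it suffices to exhibit one choice of the complete rows, and of the known entries of row $k$, with $a_k \neq 0$. Choose the $M(m_\bullet)$ complete rows to be the points $P_\alpha$ for all $\alpha$ except one index $\alpha^\ast$ with $\alpha^{\ast(j_k)} = i_k$, using coordinate points as in the proof of \Cref{thm:sdl-prime-cm}. Their Segre vectors are distinct standard basis vectors, so $h = e_{\alpha^\ast}$. Then choose the known groups of row $k$ to be $e_{\alpha^{\ast(j)}}$ for $j \neq j_k$, which gives $a_k = 1$. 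This specialization is a valid choice of values on the coordinates in $U$, so $a_k$ is a nonzero polynomial. It remains nonzero on a dense open set, and we may intersect these open sets over the finitely many $k$. The only remaining check is that $X$ is the affine cone, so that scaling does not obstruct the argument; the completed point lies in $X$ by definition of the minors. Finally, \Cref{cor:thm-restated} gives the theorem.
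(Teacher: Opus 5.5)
Your proposal is correct and follows essentially the same route as the paper. Like the paper, you identify the facets of the spread complex as $M(m_\bullet)$ complete rows plus one deleted variable in each remaining row, match their size with $\dim \mathcal{V}_{\mathrm{aff}}(I_{m_\bullet,n})$, and prove $\pi_U$ dominant by solving for each missing variable from a linear equation whose leading coefficient is shown nonzero by a coordinate-point specialization.

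The differences are cosmetic. You specialize the complete rows and row $k$ simultaneously to get $a_k=\pm 1$, whereas the paper specializes only row $k$ to reduce $C_k$ to the cofactor $h_\beta$ and then uses its earlier argument that $h_\beta\neq 0$. Also, in Step 1 you only need each cofactor to be a nonzero polynomial, not linear independence of the cofactors; your own Step 2 specialization already gives this.
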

\begin{proof}
  We apply~\Cref{cor:thm-restated} to \(\mathcal{V}_{\mathrm{aff}}(I_{m_\bullet,n})\), which is irreducible by~\Cref{thm:sdl-prime-cm}. Let \(F\) be the set of maximal minors of \(\mathcal{N}_{m_\bullet,n}(p_\bullet)\).  These minors are squarefree because each determinant term uses one squarefree matrix entry from each selected row, and different rows involve disjoint variables.

  By~\Cref{cor:thm-restated}, it is enough to show that every facet \(U\) of \(\Delta(F)\) is a basis of \(\Delta(\mathcal{V}_{\mathrm{aff}}(I_{m_\bullet,n}))\).  Since \(\mathcal{V}_{\mathrm{aff}}(I_{m_\bullet,n})\) is irreducible, the bases of this algebraic matroid all have cardinality
  \[
    \dim \mathcal{V}_{\mathrm{aff}}(I_{m_\bullet,n})
    =
    n(d(m_\bullet)+r)-\bigl(n-M(m_\bullet)\bigr)
    =
    M(m_\bullet)+n\bigl(d(m_\bullet)+r-1\bigr).
  \]

  A maximal minor using the rows indexed by \(K\subset [n]\), \(|K|=M(m_\bullet)+1\), has spread equal to the full set of variables in those rows.  No variable from another row can occur, and any selected-row variable \(p_{j,a}^{(k)}\) appears by choosing a column \(\alpha\) with \(\alpha^{(j)}=a\), expanding along row \(k\), and evaluating the corresponding cofactor on coordinate Segre rows indexed by \(\alpha'\ne\alpha\), where it becomes a nonzero permutation determinant.  Thus a facet \(U\) of \(\Delta(F)\) is exactly \(M(m_\bullet)\) complete rows together with all but one variable from each remaining row.  Since a fixed row contains
  \[
    \sum_{j=1}^r(m_j+1)=d(m_\bullet)+r
  \]
  variables, every facet \(U\) has cardinality
  \begin{align*}
    |U| &= M(m_\bullet)\bigl(d(m_\bullet)+r\bigr)
          +
          \bigl(n-M(m_\bullet)\bigr)\bigl(d(m_\bullet)+r-1\bigr)\\
        &=
          M(m_\bullet)+n\bigl(d(m_\bullet)+r-1\bigr)
          =
          \dim \mathcal{V}_{\mathrm{aff}}(I_{m_\bullet,n}).
  \end{align*}

  It remains to show that the coordinate projection \(\pi_U\) is dominant.  Relabel the rows so that \(U\) contains all variables in rows \(1,\ldots,M(m_\bullet)\), and write the unique deleted variable in row \(k>M(m_\bullet)\) as \(p_{j_k,a_k}^{(k)}\).  For the complete rows, let \(h_\alpha(p_\bullet)\) be the signed maximal cofactor polynomial obtained by deleting the column indexed by \(\alpha\).  By the preceding cofactor argument, each \(h_\alpha(p_\bullet)\) is nonzero.  The condition that an incomplete row \(k\) lie on the hyperplane determined by the complete rows is linear in \(p_{j_k,a_k}^{(k)}\), with coefficient polynomial
  \begin{equation}
    C_k(p_\bullet)=
    \sum_{\substack{\alpha\\ \alpha^{(j_k)}=a_k}}
    h_\alpha(p_\bullet)
    \prod_{\ell\ne j_k} p_{\ell,\alpha^{(\ell)}}^{(k)}.
  \end{equation}
  This coefficient is not identically zero, since after choosing any \(\beta\) with \(\beta^{(j_k)}=a_k\) and specializing the retained variables in row \(k\) to \(p_{\ell,\beta^{(\ell)}}^{(k)}=1\) for \(\ell\ne j_k\) and all others to \(0\), it reduces to the nonzero polynomial \(h_\beta(p_\bullet)\).  Therefore, on the dense open set where the polynomial evaluations of all \(h_\alpha(p_\bullet)\) and \(C_k(p_\bullet)\) are nonzero, the missing variables are uniquely determined.  For the completed point \(q_\bullet\), every row lies on the hyperplane \(H(q_\bullet)=[h_\alpha(q_\bullet)]_\alpha\).  The completed matrix has rank at most \(M(m_\bullet)\), so the image of \(\pi_U\) contains this dense open set.  Hence \(\pi_U\) is dominant, and every facet of \(\Delta(F)\) is a basis of \(\Delta(\mathcal{V}_{\mathrm{aff}}(I_{m_\bullet,n}))\).
\end{proof}

\section{Segre-Determinantal Loci in Computer Vision} \label{sec:sdl-in-cv}

We now return to the problem from the introduction, motivated by computer vision.
Our goal in this section is to introduce the image variety (\Cref{def:image-variety}) for generalized pinhole cameras and its relationship with the Segre-determinantal locus of the previous two sections.
In particular, we will see that $\Seg_{(1,1,1) , n}$ coincides with the image variety for three flatland cameras.

For an integer sequence $m_\bullet = (m_1, \ldots , m_r)$ as before, suppose now that we have a corresponding sequence $A_\bullet = (A_1, \ldots , A_r),$ where each $A_i : \P^N \dashrightarrow \P^{m_i}$ is a projective linear map.
Choosing standard coordinates on each projective space lets us identify each $A_i$ with a $(m_i +1 ) \times (N+1)$ matrix, which is well-defined up to scale.
Thus, we may regard the sequence $A_{\bullet }$ as a point

\begin{equation}\label{eq:matrix-space}
A_\bullet \in \P^{m_\bullet \times N} := \P \left( \CC^{(m_1+1) \times (N+1)} \right) \times \cdots \times \P \left( \CC^{(m_r+1) \times (N+1)} \right).
\end{equation}

(Here, $\P \left( \bullet \right)$ denotes the projectivization of a complex vector space.)

\begin{definition}\label{def:image-variety}
For any $n$, $N$, and $m_\bullet $ as above, we define the associated \emph{image variety} (c.f.~\cite{agarwal2024atlas}) to be the closed image of the rational map
\begin{align}
\Theta_{N, m_\bullet , n} : \left( \P^N \right)^{\times n} \times \P^{m_\bullet \times N} &\dashrightarrow \left( \P^{m_\bullet}\right)^{\times n} \nonumber  \\
(q_\bullet , A_\bullet ) &\mapsto \left((A_1 q_j,\ldots,A_rq_j) \mid 1\le j \le n \right)  \label{eq:general-image-map}.
\end{align}
\end{definition}

For $N=2$ and $m_\bullet = (1,1,1),$ this agrees with the flatlander imaging map in~\Cref{eq:flatland-image-map}.

\begin{proposition}\label{prop:image-contained-segre}
For $N< d(m_\bullet )$, we have
\[
\overline{\Image \left( \Theta_{N,m_\bullet , n}\right)}
\subset
\Seg_{m_\bullet , n}.
\]
\end{proposition}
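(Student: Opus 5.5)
Since $\Seg_{m_\bullet,n}$ is closed in $(\P^{m_\bullet})^{\times n}$ by definition, it suffices to show that every point of $\Image(\Theta_{N,m_\bullet,n})$ lies in $\Seg_{m_\bullet,n}$. In fact, it is enough to do this on the dense open subset of the domain where $\Theta_{N,m_\bullet,n}$ is defined, since the closure of the image of that set is the image variety. By \Cref{def:sdl}, we must exhibit, for each $(q_\bullet,A_\bullet)$ in the domain of definition, a single hyperplane of $\P^{M(m_\bullet)}$ containing all the Segre images $\sigma_{m_\bullet}(A_1q_j,\ldots,A_rq_j)$. Crucially, this hyperplane must depend only on $A_\bullet$, not on the points $q_j$.

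The key step is to consider the composite map
\[
\phi_{A_\bullet}\colon \CC^{N+1}\to \CC^{M(m_\bullet)+1},\qquad q\mapsto (A_1q)\otimes\cdots\otimes(A_rq).
\]
Each coordinate of $\phi_{A_\bullet}$ is a homogeneous form of degree $r$ in $q$, and there are $M(m_\bullet)+1$ of them. A hyperplane $\sum_\alpha c_\alpha y_\alpha=0$ contains all the images exactly when the $M(m_\bullet)+1$ coordinate forms satisfy a nontrivial linear relation. The plan is to show that this relation must exist when $N<d(m_\bullet)$.

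I will first try a dimension count: the closure $Y_{A_\bullet}$ of the image of $\P^N\dashrightarrow\P^{M(m_\bullet)}$ under $\phi_{A_\bullet}$ has dimension at most $N<d(m_\bullet)$. However, low dimension does not by itself force degeneracy; for example, a twisted cubic spans $\P^3$. So I need a better argument, and I expect this step to be the main obstacle.

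A cleaner route is to note that $\phi_{A_\bullet}$ factors linearly through a symmetric power. The coordinates of $\phi_{A_\bullet}(q)$ are products $\prod_i (A_iq)_{\alpha^{(i)}}$ of linear forms, so they lie in $\operatorname{Sym}^r(\CC^{N+1})^*$. That space has dimension $\binom{N+r}{r}$, which is not generally smaller than $M(m_\bullet)+1$, so this alone does not settle the question either. Instead, I will use a multilinear bound. The forms span the image of the linear map
\[
(\CC^{m_1+1})^*\otimes\cdots\otimes(\CC^{m_r+1})^* \to \operatorname{Sym}^r(\CC^{N+1})^*,
\]
given by $\ell_1\otimes\cdots\otimes\ell_r\mapsto \prod_i (\ell_i\circ A_i)$. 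This map factors through $\operatorname{Sym}$ applied to the images $V_i=A_i^*(\CC^{m_i+1})^*\subset(\CC^{N+1})^*$. We need this map to have nontrivial kernel.

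I would prove non-injectivity as follows. The map is the pullback of functions along $\P^N\dashrightarrow \P^{m_\bullet}\xrightarrow{\sigma}\P^{M(m_\bullet)}$, and injectivity is equivalent to the image of $q\mapsto (A_1q,\ldots,A_rq)$ not lying in any $(1,\ldots,1)$-multidegree hypersurface of $\P^{m_\bullet}$. The image $Z$ of this map has dimension at most $N\le d(m_\bullet)-1$. I would then show that any irreducible subvariety $Z\subset\P^{m_\bullet}$ of dimension less than $d(m_\bullet)$ lies in some multilinear hypersurface. The idea is to project to a product of lines: choose a generic linear projection in each factor, so that the composite sends $Z$ into a proper subvariety, then pull back a multilinear equation of the form $\prod_i \ell_i$. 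A simpler alternative is to pick a point $x=(x_1,\ldots,x_r)$ with $x\notin Z$ and use the hypersurface $\{\prod_i\ell_i(p_i)=0\}$. This fails if $Z$ meets every product of hyperplanes, which is plausible, so I would refine the argument by induction on $r$. The inductive step fibers $Z$ over its projection to the first factor and uses a dimension count $\dim Z<\sum m_i$, so that either the projection to $\P^{m_1}$ is non-dominant (and we take a linear form vanishing there, via degeneracy of the linear image $A_1(\P^N)$ when $\operatorname{rank}A_1<m_1+1$) or the generic fiber has dimension less than $\sum_{i\ge2}m_i$, to which the induction hypothesis applies. Patching the fiberwise multilinear forms into one global form is the delicate part, and I would handle it by exhibiting the linear dependence over the function field of $\P^{m_1}$ and clearing denominators so that the result stays linear in $p_1$.
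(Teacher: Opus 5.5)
Your reduction is correct. Since $\Seg_{m_\bullet,n}$ is closed, it suffices to produce, for generic $A_\bullet$, a nonzero form of multidegree $(1,\ldots,1)$ on $\P^{m_\bullet}$ that depends only on $A_\bullet$ and vanishes on $Z_{A_\bullet}=\overline{\{(A_1q,\ldots,A_rq)\}}$. But the proposal never produces this form, and the route you choose cannot work as stated.

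The lemma you plan to prove is false: it says that every irreducible $Z\subset\P^{m_\bullet}$ with $\dim Z<d(m_\bullet)$ lies in a multilinear hypersurface. Already for $r=1$ it says every curve in $\P^3$ lies in a plane, which your own twisted cubic refutes. For $r=2$, the curve $Z=\{([s:t],[s^2:t^2])\}\subset\P^1\times\P^1$ has dimension $1<2$, yet its Segre image $[s^3:st^2:s^2t:t^3]$ spans $\P^3$.

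This example shows exactly where your induction fails. The projection $Z\to\P^1$ is dominant, and each fiber is a point carrying the linear form $t^2y_0-s^2y_1$ in $p_2$. However, its coefficients are quadratic in $p_1=[s:t]$, and clearing denominators over the function field of $\P^{m_1}$ gives no control over the degree in $p_1$. So the ``patching'' step is not a technicality; it is the entire content of the proposition. It can only succeed by exploiting that all the $A_i$ are linear, and you use that only in the non-dominant case.

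Parameter counts do not rescue it either. For $N=3$, $m_\bullet=(2,2)$, a bilinear form has $9$ coefficients, and vanishing on $Z_{A_\bullet}$ imposes $\binom{5}{2}=10$ linear conditions. Yet the fundamental matrix exists, so the dependence is structural, not numerical.

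The missing idea is the determinantal construction the paper uses. The relations $A_iq_j=\lambda_{ij}p_{ij}$ say that the $(d(m_\bullet)+r)\times(N+r+1)$ block matrix $\mathcal{M}_{N,m_\bullet,j}(A_\bullet,p_\bullet)$ has the nonzero kernel vector $(q_j,-\lambda_{1j},\ldots,-\lambda_{rj})$. The hypothesis $N<d(m_\bullet)$ is precisely what makes this matrix have at least as many rows as columns, so all its $(N+r+1)$-minors vanish at $P_j$.

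Each such minor is linear in each of the last $r$ columns separately. Hence it has the form $\langle\Psi_S(A_\bullet),\sigma_{m_\bullet}(P_j)\rangle$, with a coefficient tensor depending only on $A_\bullet$. Any $S$ with $\Psi_S(A_\bullet)\neq 0$ then gives one hyperplane containing $\sigma_{m_\bullet}(P_j)$ for all $j$ simultaneously.

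Such an $S$ exists for generic $A_\bullet$. For generic $A_\bullet$, the set of $P$ at which $\mathcal{M}$ drops rank has dimension at most $N<d(m_\bullet)$, so $\mathcal{M}$ has full column rank at a generic $P$. This construction yields a form that is linear in every $p_i$ at once, which is exactly what your fiberwise approach cannot guarantee.
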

Our proof of \Cref{prop:image-contained-segre} employs a rational map $\Psi_{N, m_\bullet }: \P^{m_\bullet \times N} \dashrightarrow \mathcal{T}_{N, m_\bullet } $ which maps the space of $r$-tuples of generalized cameras dominantly onto the associated variety of \emph{Grassmann tensors.}
Each point in this variety is a tuple of $r$th-order tensors, each considered up to scale,
\begin{equation}\label{eq:tensor-space}
\mathcal{T}_{N, m_\bullet } \subseteq \P \left(
\CC^{m_1+1} \otimes \cdots \otimes \CC^{m_r + 1}
\right)^{\times B (N, m_\bullet )},
\quad
\text{ w/ } B (N, m_\bullet ) = \binom{d(m_\bullet) + r }{N+r+1}.
\end{equation}
To define the map $\Psi_{N, m_\bullet },$ suppose first that we have $\Theta_{N, m_\bullet , n} (q_\bullet , A_\bullet ) = p_\bullet $ for some $p_\bullet \in \left(\P^{m_\bullet }\right)^{\times n}.$
Fixing homogeneous coordinates, this then implies that for all $1\le i \le r $ and $1\le j \le n$ there exists a nonzero scalar $\lambda_{ij}$ such that $A_i q_j = \lambda_{ij} p_{ij}.$
For fixed $j,$ we obtain a linear system
\begin{equation}\label{eq:matrix-focals}
\mathcal{M}_{N, m_\bullet , j} (A_\bullet, p_\bullet) \cdot
\begin{pmatrix}
q_j\\
-\lambda_{1j}\\
\vdots \\
-\lambda_{rj}
\end{pmatrix}
= \begin{pmatrix}
    0 \\
    \vdots \\
    0
\end{pmatrix},
\end{equation}
whose coefficient matrix is given in block form as
\begin{equation}\label{eq:matrix-focals-block}
\mathcal{M}_{N, m_\bullet , j} (A_\bullet, p_\bullet) =
\begin{pmatrix}
    A_1 & p_{1j}\\
    \vdots  & & \ddots  \\
    A_r & & & p_{rj}
\end{pmatrix} \in \CC^{\left( d(m_\bullet) + r  \right) \times  \left( N + r + 1\right)}.
\end{equation}
Since $N<d(m_\bullet)$, the maximal minors of the matrix in~\Cref{eq:matrix-focals} are determinants of $(N+r+1) \times (N+r+1)$ submatrices. These minors define multihomogeneous forms on $\P^{m_\bullet \times N} \times \P^{m_\bullet} $ which are linear in the $p_{ij}$ variables. Thus, every such minor can be written as
\begin{equation}\label{eq:grassmann-tensor-linear-forms}
\displaystyle\sum \Psi_{S, N, m_\bullet} (A_\bullet)_{i_1, \ldots , i_r} \cdot p_{1j}^{(i_1)} \cdots p_{rj}^{(i_r)},
\end{equation}
where $S$ is some set indexing $N+r+1$ rows of the matrix $\mathcal{M}_{N, m_\bullet , j} (A_\bullet, p_\bullet)$.
Finally, the rational map $\Psi_{N, m_\bullet }$ is defined component-wise by the projective equivalence classes of the coefficient tensors $\Psi_{S, N, m_\bullet } (A_\bullet)$ defining the linear forms in~\Cref{eq:grassmann-tensor-linear-forms},
\begin{align}\label{eq:grassmann-tensor-map}
\Psi_{N, m_\bullet} : \P^{m_\bullet \times N} &\dashrightarrow \mathcal{T}_{N, m_\bullet } \\
A_\bullet &\mapsto \bigg( [\Psi_{S, N, m_\bullet } (A_\bullet)] \mid  S  \subseteq [m_1+\ldots + m_r+r] \text{ w/ } \# S = N+r+1 \bigg) \nonumber
\end{align}

\begin{proof}[Proof of \Cref{prop:image-contained-segre}]
It suffices to prove the claim on the dense open subset of the domain of $\Theta_{N,m_\bullet,n}$ where $\Psi_{N,m_\bullet}$ is defined, since $\Seg_{m_\bullet,n}$ is closed.
Suppose $p_\bullet = (P_1, \ldots , P_n)$ lies in the image of this open subset, so that $p_\bullet = \Theta_{N, m_\bullet , n} (q_\bullet , A_\bullet)$ for some point $(q_\bullet , A_\bullet)$ with $\Psi_{N,m_\bullet}(A_\bullet)$ defined.
Choose a subset $S  \subseteq [ m_1+\ldots + m_r+r]$ of size $N+r+1 $ whose coefficient tensor $\Psi_{S,N,m_\bullet}(A_\bullet)$ is nonzero.
For every $1\le j\le n$, the linear system in~\Cref{eq:matrix-focals} has a nonzero solution, so the corresponding maximal minor in~\Cref{eq:grassmann-tensor-linear-forms} vanishes at $P_j$.
Thus the points $\sigma_{m_\bullet} (P_j)$ all lie on the hyperplane in $\P^{M(m_\bullet)}$ defined by $\Psi_{S,N,m_\bullet}(A_\bullet)$, and hence $p_\bullet \in \Seg_{m_\bullet , n}$.
\end{proof}

In general, the inclusion of \Cref{prop:image-contained-segre} is strict.
This may be seen by comparing the dimension of $\Seg_{m_\bullet , n}$ provided by \Cref{thm:sdl-irreducibility} with the \emph{expected dimension formula} for the image variety
\begin{align}
\mathbb{E} \dim \left(\Image \Theta_{N, m_\bullet, n} \right) &= \dim (\operatorname{Dom}  \Theta_{N, m_\bullet, n}) - \dim \operatorname{PGL}_{N+1} \nonumber \\
&=
N ( n + r) + (N+1) d(m_\bullet ) - (N^2 + 2N)
,\label{eq:expected-dim-image-variety}
\end{align}
where $n$ (depending on $N$ and $m_\bullet $ ) is sufficiently large.
This formula comes by observing that $\Theta_{N, m_\bullet , n}$ is invariant under the natural action of $H \in \operatorname{PGL}_{N+1}$, $$H \cdot (q_\bullet , A_\bullet ) = (Hq_1, \ldots , Hq_n, A_1 H^{-1}, \ldots , A_r H^{-1}).$$
For concrete values of $N$, $m_\bullet $, and sufficiently large $n,$ we may verify that the expected dimension in~\Cref{eq:expected-dim-image-variety} equals the true dimension of the image variety by computing tangent spaces.

Below, we consider several specific instances of the image varieties described above, as well as their corresponding Segre-determinantal loci. 
Each example gives pointers to the algebraic vision literature on which it is based, which may be useful for further study of image varieties.

First, we solve the problem stated in the introduction of our paper.

\begin{example}\label{ex:flatland-trifocal}
For $m_\bullet = (1,1,1)$, $N=2$, we obtain the image variety associated to \emph{three flatland cameras.}
In this case, the associated Grassmann tensor is the $2\times 2\times 2 $ trifocal tensor.
The study of this tensor in computer vision was pioneered in work of {\AA}str{\"o}m-Oskarsson~\cite{aastrom2000solutions} and Quan~\cite{quan2002two}.
In computer vision terminology, the flatlander trifocal tensor has ``no internal constraints'', a designation which is often justified by informal dimension-counting.

We now offer rigorous justification for such dimension-counting.
To ease notation, we write $\Psi = \Psi_{[6], 2, (1,1,1) }$ for the flatlander trifocal tensor map constructed in the proof of \Cref{prop:image-contained-segre}.

\begin{lemma}
  \label{lem:flatland-trifocal-dominant}
  The rational map
  $
    \Psi:\left(\P(\CC^{2\times 3})\right)^{\times 3}\dashrightarrow \left(\P^7\right)^*
  $
is dominant.
\end{lemma}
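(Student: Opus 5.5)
Since both sides are irreducible and the target $\left(\P^7\right)^*$ has dimension $7$, I will prove dominance by showing that the differential of $\Psi$ has rank $7$ at a single point where $\Psi$ is defined. The source has dimension $3\cdot 5=15$. The map is equivariant for $\operatorname{PGL}_3$ acting on the right by $A_i\mapsto A_iH^{-1}$, and $\Psi$ is invariant under it. It is also equivariant for $\operatorname{PGL}_2^{\times 3}$ acting on the left of each $A_i$, which acts on the tensor by the corresponding change of coordinates in each factor. Hence the expected fiber dimension is $15-7=8=\dim\operatorname{PGL}_3$, and the claim amounts to saying that a generic fiber is, up to closure, a single $\operatorname{PGL}_3$-orbit of dimension $8$.

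\textbf{Step 1 (explicit formula).} Since $S=[6]$, the $6\times 6$ matrix $\mathcal{M}=\begin{pmatrix}A_1&p_1&&\\A_2&&p_2&\\A_3&&&p_3\end{pmatrix}$ is square. Laplace expansion along the last three columns gives
\[
\det\mathcal{M}=\sum_{i_1,i_2,i_3}\pm\, p_1^{(i_1)}p_2^{(i_2)}p_3^{(i_3)}\det\begin{pmatrix}a_1^{(\bar i_1)}\\ a_2^{(\bar i_2)}\\ a_3^{(\bar i_3)}\end{pmatrix},
\]
where $a_k^{(\bar i)}$ denotes the row of $A_k$ not indexed by $i$. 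Each of the $8$ coefficients is therefore a $3\times 3$ determinant built from one row of each camera. This is the classical trifocal tensor $T_{i_1i_2i_3}$, up to relabelling the indices by $i\mapsto\bar i$ and fixing signs.

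\textbf{Step 2 (surjectivity onto a dense set).} Rather than computing a Jacobian, I would show directly that every tensor in a dense open set is attained. Use the $\operatorname{PGL}_3$ action to normalize $A_1=\begin{pmatrix}1&0&0\\0&1&0\end{pmatrix}$. The coefficients then become $\det(e_{\bar i_1},a_2^{(\bar i_2)},a_3^{(\bar i_3)})$, which are $2\times 2$ minors of the $2\times 2$ blocks obtained by deleting one coordinate from a row of $A_2$ and a row of $A_3$.

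Further normalize by the residual stabilizer of $A_1$. This consists of matrices $H$ with first two columns fixed up to scale, so that the third column is free. With it I set the third column of $A_2$ to $(0,1)^T$, for instance, leaving $6+4=10$ minus a few remaining parameters. I then exhibit explicit $A_2,A_3$ and verify that the $8\times(\text{free parameters})$ Jacobian of the coefficient map, taken modulo scale, has rank $7$ at one rational point. This is a routine hand or computer check.

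An alternative I would try first, because it avoids computation, is a fiber argument. Given a generic $T\in\left(\P^7\right)^*$, the $\operatorname{PGL}_2^{\times 3}$ orbits of $2\times2\times2$ tensors are classified, and the generic orbit is dense: Cayley's hyperdeterminant is nonzero there and $\dim\operatorname{PGL}_2^{\times 3}=9\ge 7$. By equivariance, the image of $\Psi$ is $\operatorname{PGL}_2^{\times 3}$-stable. So it suffices to find one camera triple whose tensor has nonzero hyperdeterminant, for instance one equivalent to $e_{000}+e_{111}$. Then the image contains a dense orbit and is dominant.

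\textbf{Main obstacle.} The hard part is producing that single explicit witness: a triple $A_\bullet$ whose tensor $\Psi(A_\bullet)$ is nonzero and lies in the dense $\operatorname{PGL}_2^{\times 3}$-orbit. I would try cameras with rows drawn from the standard basis, for example $A_1$ with rows $e_1,e_2$, $A_2$ with rows $e_2,e_3$, and $A_3$ with rows $e_1+e_2+e_3$ and $e_3-e_1$. I would then compute the $8$ determinants and check that Cayley's hyperdeterminant is nonzero, adjusting $A_3$ if needed. Irreducibility of the source, together with the fact that the image of $\Psi$ is constructible and $\operatorname{PGL}_2^{\times3}$-stable, then finishes the argument.
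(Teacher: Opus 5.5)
Your proof is correct, and the one computation you left open does go through. Your preferred route, however, differs from the paper's. The paper uses a Jacobian certificate: the $8\times 18$ Jacobian of the affine cone map $\widehat{\Psi}$ at one explicit camera triple contains a $\pm 1$ diagonal $8\times 8$ block. Your fallback in Step 2 is essentially this, with an unnecessary $\operatorname{PGL}_3$-normalization.

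Your main argument instead combines two ingredients:
\begin{itemize}
\item $\operatorname{GL}_2^{\times 3}$-equivariance. Since $\mathcal{M}(g_\bullet A_\bullet,p)=\operatorname{diag}(g_1,g_2,g_3)\,\mathcal{M}(A_\bullet,g_\bullet^{-1}p)$, the tensor transforms contragrediently, so the image of $\Psi$ is stable.
\item The classical fact that $2\times2\times2$ tensors with nonzero hyperdeterminant form a single orbit. You should cite this fact, or prove it via the slice pencil; the count $9\ge 7$ alone does not give a dense orbit.
\end{itemize}

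Your witness works. Write $T'_{j_1j_2j_3}=\det(a_1^{(j_1)},a_2^{(j_2)},a_3^{(j_3)})$. Then $T'_{000}=T'_{001}=T'_{110}=1$, $T'_{010}=T'_{111}=-1$, and the other three entries are $0$. Hence $\det(xT'_{0\bullet\bullet}+yT'_{1\bullet\bullet})=x^2-2xy$, whose discriminant is $4\neq 0$. The passage from $T'$ to $\Psi(A_\bullet)$ is $i\mapsto\bar{i}$ together with the sign $(-1)^{i_1+i_2+i_3}$, and this is itself a $\operatorname{GL}_2^{\times 3}$ transformation. Even more conveniently, the paper's own point $A_\bullet$ gives exactly $T'=e_{000}+e_{111}$, so it is the ideal witness for your argument.

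What each route buys:
\begin{itemize}
\item Yours gives a nearly computation-free proof and a stronger conclusion: the image contains the entire complement of the hyperdeterminant hypersurface.
\item The paper's Jacobian certificate does not depend on the tensor format. It therefore applies to Grassmann-tensor maps such as the $3\times3\times3$ case, where no dense orbit exists.
\end{itemize}

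One aside in your first paragraph is wrong, though you never use it. Dominance only says the generic fiber is $8$-dimensional, i.e.\ a finite union of $\operatorname{PGL}_3$-orbits. In fact the generic fiber consists of two orbits. This is Quan's two-way ambiguity, which the paper cites, and it reflects the $\mathbb{Z}/2$ in the stabilizer of $e_{000}+e_{111}$.
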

\begin{proof}
It suffices to prove that the corresponding map of affine varieties 
\[
\widehat{\Psi } : \left(\CC^{2\times 3}\right)^{\times 3}\dashrightarrow \mathbb{C}^8,
\]
is dominant, which follows upon observing that the \(8\times18\) Jacobian matrix $d \widehat{\Psi} (A_\bullet) $ at the point
  \[        
    A_\bullet =\left( 
    \begin{pmatrix}1 & 0 & 0 \\ 0 & 1 & 0\end{pmatrix}, \, 
    \begin{pmatrix}0&1 & 0\\ 0 & 0 & 1\end{pmatrix}, \, \begin{pmatrix}0&0&1\\ 1 & 0 & 0 \end{pmatrix}\right) ,
  \]
in an appropriate basis, contains a diagonal $8\times 8$ matrix with $\pm 1$ on the diagonal.
\end{proof}

\begin{theorem}\label{thm:flatland}
  For all $n\ge 1,$ we have
  \begin{equation}\label{eq:segre-equals-image-variety}
    \overline{\Image \Theta_{2,(1,1,1), n}}
    =
    \Seg_{(1,1,1), n} .
  \end{equation}
\end{theorem}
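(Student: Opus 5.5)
One inclusion is already available: \Cref{prop:image-contained-segre} applies since $N=2<3=d(1,1,1)$, so $\overline{\Image \Theta_{2,(1,1,1),n}}\subseteq \Seg_{(1,1,1),n}$ for every $n$. It therefore remains to prove the reverse inclusion. For $n\le 7$ the locus $\Seg_{(1,1,1),n}$ is all of $(\P^1\times\P^1\times\P^1)^{\times n}$. Moreover, the coordinate projection $(\P^{m_\bullet})^{\times 8}\to(\P^{m_\bullet})^{\times n}$ forgetting the last factors maps $\Image\Theta_{2,(1,1,1),8}$ onto $\Image\Theta_{2,(1,1,1),n}$, and maps $\Seg_{(1,1,1),8}$ onto $(\P^{m_\bullet})^{\times n}$: any $n\le 7$ points lie on a common hyperplane, and extra points can be added on it. So the cases $n\le 7$ follow from the case $n=8$. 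We therefore assume $n\ge 8$. Since $\Seg_{(1,1,1),n}$ is irreducible by \Cref{thm:sdl-irreducibility}, it suffices to show that $\overline{\Image\Theta}$ contains a dense subset of it, or simply that $\dim\overline{\Image\Theta}\ge \dim \Seg_{(1,1,1),n}=2n+7$ (see \Cref{ex:sdl-flatland-special-case}).

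Rather than computing dimensions via the expected-dimension formula, I would argue pointwise using \Cref{lem:flatland-trifocal-dominant}. Let $W$ be the incidence variety from the proof of \Cref{thm:sdl-irreducibility}. That proof shows $W=\overline{\pi_2^{-1}(\mathcal{U})}$, with fibers $\pi_2^{-1}(H)\cong (X\cap H)^{\times n}$ over generic $H$. Because $\Psi$ is dominant, its image contains a dense open subset of $(\P^7)^*$; shrinking $\mathcal{U}$, we may assume every $H\in\mathcal{U}$ is a trifocal tensor $H=\Psi(A_\bullet)$ with $A_\bullet$ three cameras in general position, with distinct centers. It then suffices to show: for such $A_\bullet$ and a generic point $P=(p_1,p_2,p_3)\in \sigma^{-1}(X\cap H)$, there is $q\in\P^2$ with $A_iq\sim p_i$ for $i=1,2,3$. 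Applying this to each of the $n$ factors separately (with one common $A_\bullet$) places a dense subset of $\pi_2^{-1}(H)$ in $\Image\Theta$. Hence a dense subset of $W$, and so of $\pi_1(W)=\Seg_{(1,1,1),n}$, lies in $\Image\Theta$.

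The key step is to show that the trilinear constraint given by $H$ is sufficient for reconstruction, and not only necessary. If $H(p_1\otimes p_2\otimes p_3)=0$, then the $6\times 6$ matrix $\mathcal{M}_{2,(1,1,1),j}(A_\bullet,p_\bullet)$ in \Cref{eq:matrix-focals-block} is singular, so there is a nonzero kernel vector $(q,-\lambda_1,-\lambda_2,-\lambda_3)$. The obstacle is degeneracy: this kernel vector might have $q=0$ or some $\lambda_i=0$, in which case $A_iq=0$, i.e.\ $q$ is a camera center. I would rule this out generically. The set of $P\in X\cap H$ that admit only degenerate kernel vectors lies in a proper closed subset of the irreducible surface $X\cap H$. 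Indeed, for $q=0$ one needs two of the $p_i$ to coincide with specific points, and for $\lambda_i=0$ one needs $q$ equal to the center of $A_i$, which forces $p_k\sim A_k c_i$ for the two other cameras $k\ne i$. Both conditions cut out curves, of dimension at most $1<2$. This avoidance of epipole-type points is the main obstacle. If it proves delicate, the fallback is to verify instead that the differential of $\Theta_{2,(1,1,1),8}$ has rank $23$ at an explicit point, using a Jacobian computation like the one in \Cref{lem:flatland-trifocal-dominant}; this gives $\dim\overline{\Image\Theta}=2n+7$ directly for $n\ge 8$, and irreducibility of $\Seg_{(1,1,1),n}$ then finishes the proof.
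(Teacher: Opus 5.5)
Your proposal is correct, but it proves the reverse inclusion by a different route from the paper. The paper argues by dimension. It uses the rational map $\rho\colon \overline{\Image\Theta_{2,(1,1,1),n}}\dashrightarrow(\P^7)^*$, $p_\bullet\mapsto\P(\ker\mathcal{N}_{(1,1,1),n}(p_\bullet))$, and gets dominance from \Cref{lem:flatland-trifocal-dominant}. It then exhibits a $2n$-dimensional family in a generic fiber by a purely \emph{forward} construction: choose $p_{1j},p_{2j}$ freely, triangulate $q_j$ from $A_1,A_2$, and set $p_{3j}=A_3q_j$. This gives $\dim\overline{\Image\Theta}=2n+7$, and irreducibility of both sides gives equality.

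You instead prove fiberwise surjectivity on the incidence variety $W$. For a generic hyperplane $H=\Psi(A_\bullet)$, every point of $X\cap H$ off three epipolar lines is reconstructible with those same cameras. Hence the closed set $\overline{\Image\Theta}$ contains $\pi_1(\pi_2^{-1}(H))$ for all $H$ in a dense open set, and therefore all of $\pi_1(W)=\Seg_{(1,1,1),n}$.

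What each route buys:
\begin{itemize}
\item The paper's argument is shorter, needs no degeneracy analysis, and uses only the statement of \Cref{thm:sdl-irreducibility}.
\item Yours relies on the internal structure $W=\overline{W^{\mathrm{O}}}$ from that proof and on $X\cap H$ having pure dimension $2$. In return it gives a sharper geometric conclusion: for a generic trifocal tensor, the trilinear constraint is \emph{sufficient}, not merely necessary, for reconstruction with cameras realizing that tensor, away from the epipolar loci.
\item Yours also avoids having to check that $\ker\mathcal{N}(p_\bullet)$ is generically one-dimensional on the image, which is needed for $\rho$ to be defined.
\item Your reduction of $n\le 7$ to $n=8$ by forgetting factors is cleaner than the paper's appeal to tangent-space computations.
\end{itemize}

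Three small points:
\begin{itemize}
\item The case $q=0$ is vacuous, not a condition that ``two $p_i$ coincide with specific points''. From $A_iq=\lambda_ip_i$, $q=0$ and $p_i\neq 0$ force every $\lambda_i=0$, so the whole kernel vector vanishes. The only genuine degeneracy is some $\lambda_i=0$, and you handle that correctly using distinct centers.
\item Your fallback Jacobian computation at $n=8$ would give $\dim=23$ only for $n=8$. Larger $n$ would need a further fiber argument, but the main argument does not need it.
\item The step ``a dense subset of $W$ lies in $\Image\Theta$'' is fine. The closure of your fiberwise-dense set contains each whole fiber over the shrunken $\mathcal{U}$, hence contains $W^{\mathrm{O}}$, and $W^{\mathrm{O}}$ is dense in $W$.
\end{itemize}
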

\begin{proof}
If $1\le n \le  M(m_\bullet ) = 7,$  then the equalities $\Seg_{(1,1,1), n} = \overline{\Image \Theta_{2,(1,1,1), n}}
    =\left( \mathbb{P}^1 \right)^{\times 3n}$ may be checked directly by computing tangent spaces.
Thus, we assume $n\ge 8.$ 

Since \(2<d(1,1,1)=3\), \Cref{prop:image-contained-segre} gives the inclusion
  \[
    \overline{\Image \Theta_{2,(1,1,1),n}}\subseteq \Seg_{(1,1,1),n}.
  \]
This inclusion and \Cref{thm:sdl-irreducibility} imply the inequality
\begin{equation}\label{eq:dim-inequality}
\dim \left( \overline{\Image \Theta_{2,(1,1,1),n}} \right) \le 
\dim \left( \Seg_{(1,1,1),n} \right) = 
(d(m_\bullet ) - 1) n + M(m_\bullet ) = 
2n + 7.
\end{equation}
Since both varieties $\overline{\Image \Theta_{2,(1,1,1),n}}$ and \(\Seg_{(1,1,1), n}\) are irreducible, the desired conclusion in~\Cref{eq:segre-equals-image-variety} will follow if we can prove
equality throughout~\Cref{eq:dim-inequality}.
Note that this agrees with the expected dimension in~\Cref{eq:expected-dim-image-variety},
\[
\mathbb{E} \dim \left(\Image \Theta_{2, (1,1,1), n} \right) =
2 ( n + 3) + (2+1) (1+1+1) - (2^2 + 2\cdot 2) = 
2n + 7.
\]
To prove the expected dimension is correct, consider the rational map
\begin{align*}
\rho : \overline{\Image \Theta_{2,(1,1,1),n}} &\dashrightarrow  \left(\mathbb{P}^7\right)^*\\
p_\bullet  &\mapsto \mathbb{P}\left(\ker \mathcal{N}_{(1,1,1), n} (p_\bullet )\right).
\end{align*}
This is well-defined: if $p_\bullet =(P_1, \ldots , P_n)\in \Image \Theta_{2,(1,1,1),n},$ then $\Theta_{2,(1,1,1),n} (q_\bullet , A_\bullet )= p_\bullet $ implies that the hyperplane in $\mathbb{P}^7$ defined by the trifocal tensor $\Psi (A_\bullet )$ contains $\sigma_{(1,1,1)}(P_1),\ldots , \sigma_{(1,1,1)}(P_n).$
Furthermore, $\rho $ is dominant; this follows from \Cref{lem:flatland-trifocal-dominant}.
Finally, observe that a generic fiber of $\rho $ has dimension $2n$: given a generic trifocal tensor $T=\Psi (A_1, A_2, A_3 )$ and generic points $p_{1j}, p_{2j}\in \mathbb{P}^1,$ there are uniquely determined $q_j \in \mathbb{P}^2$ with $A_1 q_j \sim p_{1j}$ and $A_2 q_j \sim p_{2j},$ and we may define $p_{3j} \sim A_3 q_j$ for all $j=1,\ldots , n$ to obtain $p_\bullet = (p_{11}, \ldots , p_{3n})\in \Image \Theta_{2,(1,1,1),n}$ with $p_\bullet \in \rho^{-1} (T )$.  Thus, 
$$
\dim \left( \overline{\Image \Theta_{2,(1,1,1),n}} \right) = 2n + 7
\quad 
\Rightarrow 
\quad 
\overline{\Image \Theta_{2,(1,1,1), n}}
    =
    \Seg_{(1,1,1), n}.
$$
\end{proof}
\end{example}

We now move from flatland to cases where the Grassmann tensors have nontrivial internal constraints.
\Cref{ex:fundamental-matrix,ex:trifocal-tensor,ex:quadrifocal-tensor} focus on classical pinhole cameras, i.e., linear projections $\mathbb{P}^3 \dashrightarrow \mathbb{P}^2$.
\Cref{ex:radial-quadrifocal-tensor} concerns projections $\mathbb{P}^3 \dashrightarrow \mathbb{P}^1,$ sometimes known as \emph{radial cameras}.

\begin{example}\label{ex:fundamental-matrix}
\cite{pratt2025segre}
For $N=3,$ $m_\bullet = (2,2)$, the inclusion of \Cref{prop:image-contained-segre} is strict:
\begin{equation}\label{eq:segre-strictly-includes-image-variety}
\overline{\Image \Theta_{3,(2,2), n}}
\subsetneq
\Seg_{(2,2), n}
\quad \text{for all } n \ge 8.
\end{equation}
In particular, when $n=8,$ \Cref{thm:sdl-irreducibility} implies that $\Seg_{(2,2), 8} = \left( \P^2 \times \P^2 \right)^{\times 8} = \left( \P^2 \right)^{\times 16}$.
The image variety in this case, however, is a hypersurface.
We briefly recall how the implicit equation of the hypersurface $\overline{\Image \Theta_{3,(2,2), 8}}$ may be derived from the Segre matrix $\mathcal{N}_{(2,2), 8}(p_\bullet)$ using a classical construction from computer vision.
For any point $p_\bullet = (P_1, \ldots , P_8) \in \overline{\Image \Theta_{3,(2,2), 8}} \subset \left( \mathbb{P}^2 \times \mathbb{P}^2 \right)^{\times 8}$,
\begin{align}\label{eq:fundamental-constraint}
   \det( \mathcal{M}_{3, (2,2) , j}(A_\bullet, p_\bullet)) = 
\det     \begin{pmatrix}
        A_1 & p_{1j} & \\
        A_2 & & p_{2j}
    \end{pmatrix}
    =
    p_{2j}^T F(A_1, A_2) p_{1j} = 0,
    \quad  j=1,\ldots , 8,
\end{align}
where the Grassmann tensor $F(A_1,A_2)$ is a $3\times 3$ rank-deficient matrix, known as the \emph{fundamental matrix} associated to the cameras $A_1, A_2.$ 
The constraints in~\Cref{eq:fundamental-constraint} have ``vectorized'' forms:
\[
 p_{2j}^T F(A_1, A_2) p_{1j} = 
 \left( p_{1j} \otimes p_{2j} \right)^T \textrm{vec} (F(A_1,A_2)),  
 \quad  j=1,\ldots , 8.
\]
In other words, the vectorized fundamental matrix must lie in the kernel of the $8\times 9$ Segre matrix.
Using Cramer's rule, we may write the kernel as a $9\times 1$ vector of polynomials in $p_\bullet $.
After suitably ``matricizing'' this $9\times 1$ vector into a $3\times 3$ matrix $F(p_\bullet ),$ we have
\begin{equation}\label{eq:8-point-constraint}
\overline{\Image \Theta_{3,(2,2), 8}}
= \mathcal{V} \left( \det F(p_\bullet )\right) \subset \left( \mathbb{P}^2\right)^{\times 16}.
\end{equation}
We refer to~\cite{hartley2003multiple} for background on the fundamental matrix, and~\cite{agarwal2017existence} for more details and applications of this particular construction.
For $n>8,$ our Segre-determinantal minors provide additional constraints
vanishing on $\overline{\Image \Theta_{3,(2,2),n}}$.
A natural question arises: for $n>8,$ do the $8$-point constraints in~\Cref{eq:8-point-constraint} and the Segre-determinantal minors suffice to give equations for $\overline{\Image \Theta_{3,(2,2),n}}$?
To make this more concrete, we may pose the problem of finding the following descriptions of the ideal of the image variety, in increasing order of difficulty: local equations, set-theoretic equations, scheme-theoretic equations, generators of the vanishing ideal, and/or (universal) Gr\"{o}bner bases.
Similar, more difficult, problems may be posed for the final three examples.
\end{example}

\begin{example}\label{ex:trifocal-tensor}
Consider now the case of three classical pinhole cameras: $N=3,$ $m_\bullet = (2,2,2)$.
Just like the case of two pinhole cameras in \Cref{ex:fundamental-matrix}, the inclusion $\overline{\Image \Theta_{3,(2,2,2),n}} \subset \Seg_{(2,2,2),n}$ is strict.
In particular, using \Cref{thm:sdl-irreducibility}, the Segre-determinantal locus is a strict subvariety of its ambient space only for $n \ge M(m_\bullet ) + 1 = 27$ points.
The image variety, however, is already a strict subvariety for $n \ge 7$ points.
The analogue of the fundamental matrix in this case is the $3\times 3 \times 3$ \emph{trifocal tensor}.
Whereas fundamental matrices are characterized by a single equation $\det F = 0,$ the ideal of trifocal tensors is very complicated; a complete description of this ideal is a result of Aholt and Oeding~\cite{aholt2014ideal}, which relies on sophisticated representation-theoretic machinery.
Alternatively, one may approach the ideal of $\overline{\Theta_{3,(2,2,2),n}}$ via the ideal of \emph{compatible fundamental matrix triples}, which was recently determined in~\cite{duff2026multiprojective}.
\end{example}

\begin{example}\label{ex:quadrifocal-tensor}
For four classical cameras---$N=3,$
$m_\bullet = (2,2,2,2)$---the Segre-determinantal locus is a strict subvariety of its ambient space only for $n \ge M(m_\bullet ) + 1 = 81$ points, whereas the image variety is a strict subvariety for $n \ge 6$ points.
The analogues of fundamental matrices and trifocal tensors are the so-called \emph{quadrifocal tensors}, whose implicit equations are only partially understood:~\cite{oeding2017quadrifocal} offers a census of equations in low degrees.
\end{example}

\begin{example}\label{ex:radial-quadrifocal-tensor}
Our final example concerns $N=3,$ $m_\bullet = (1,1,1,1)$. 
This is the setting of the so-called \emph{radial camera model}, concerning linear projections of the form $A:\mathbb{P}^3 \dashrightarrow \mathbb{P}^1$.
Such radial cameras have been used to model optical systems with radially symmetric lens distortion such as fisheye cameras. See~\cite{hruby-quadrifocal} for the implicit equations of radial quadrifocal tensors, and the references therein for more background.
The Segre-determinantal locus is a strict subvariety of its ambient space for $n \ge M(m_\bullet ) + 1 = 16$ points, whereas the image variety is a strict subvariety for $n \ge 14$.
\end{example}

\section*{Acknowledgements} 
We are grateful for the support and stimulating environment provided by the Fields Institute during the 2025 Workshop on Applications of Commutative Algebra, where this project was initiated.
We thank the workshop organizers, as well as Leo Jiang for helpful early discussions.

\bibliographystyle{amsplain}
\bibliography{references}
\end{document}